\numberwithin{equation}{section}
\newtheorem{theorem}{Theorem}[section]
\newtheorem{definition}{Definition}[section]
\newtheorem{lemma}[theorem]{Lemma}
\newtheorem{proposition}[theorem]{Proposition}
\newtheorem{corollary}[theorem]{Corollary}
\newtheorem{note}[theorem]{Note}
\newtheorem{claim}[theorem]{Claim}
\newtheorem{conjecture}[theorem]{Conjecture}
\numberwithin{equation}{section}
      \def\@setcopyright{}
      \def\serieslogo@{}
\begin{document}
\bibliographystyle{plain}

\title{Marcello's completion of graphs}
\author{J. Kok} 
\address {Independent Mathematics Researcher, City of Tshwane, South Africa.}
\email{jacotype@gmail.com}
\date{}
\keywords{Marcello's completion; Marcello number; Marcello sequence, Marcello edges, Marcello index}
\subjclass[2010]{05C07, 05C09, 05C30, 05C69, 05C82}  

\maketitle

\begin{abstract}
This paper initiates a study on a new optimization problem with regards to graph completion. The defined procedure is called, \emph{Marcello's completion} of a graph. For graph $G$ of order $n$ the \emph{Marcello number} is obtained by iteratively constructing graphs, $G_1,G_2,\dots,G_k$ by adding a maximal number of edges between pairs of distinct, non-adjacent vertices in accordance with the \emph{Marcello rule}. If for smallest $k$ the resultant graph $G_k \cong K_n$ then the Marcello number of a graph $G$ denoted by $\varpi(G)$ is equal to $\varpi(G) = k$. By convention $\varpi(K_n) = 0$, $n \geq 1$. Certain introductory results are presented.
\end{abstract}
\section{Introduction}
For general notation and concepts in graphs see \cite{bondy, harary, west}. Throughout the study only finite, simple and undirected graphs $G$ of order $n \geq 3$ and size $\varepsilon(G) \geq 1$ will be considered. A graph $G$ will have vertex set $V(G) = \{v_i: i = 1,2,3,\dots,n\}$. Recall that an edge $v_iv_j \in E(G)$ represents an unordered pair of vertices i.e. $\{v_i,v_j\}$, $v_i,v_j \in V(G)$. An edge implies some acquaintance between the vertices. The rule of acquaintance is called the incidence function. Since $G$ is required to be simple it follows that vertex $v_i$ is distinct from vertex $v_j$ and other than $v_iv_j \equiv v_jv_i$ the edge $v_iv_j$ is not repeated in $G$.

The addition or deletion of edges in a graph $G$ and the effect thereof have been widely studied. For example, see \cite{faudree, heggernes, majeed, yamuna}. Researching \emph{mathematics for the sake of mathematics} is a sufficient motivation to study \emph{Marcello's completion}\footnote{See dedication for an explanation.} of a graph. In a more general sense the study of the \emph{degree of separation} in social networks can benefit from the study of Marcello's completion. Social networks are interested in ways to achieve the ideal degree of separation namely, $1$. In graph theoretical terms a social network has a representative graph structure say, $G$. An edge $v_iv_j$ indicates that there is acquaintance between members $v_i$ and $v_j$. A theory exists that each vertex $v_i$ (member of network) has eccentricity (or degree of separation), $\epsilon_G(v_i) \leq 6$. Recent research of particular social network platforms shows that for some, $\epsilon_G(v_i) \leq 4$ or put differently, $diam(G) \leq 4$. The notion of Marcello's completion could lead to real world strategies to reduce the upper bound of the degree of separation. 

Let $G = (V(G),E(G))$ be a graph of order $n \geq 1$. Recall that the complement of $G$ denoted by $\overline{G}$ has the vertex set $V(G)$ and the edge set $E(\overline{G}) = \{v_iv_j:$~ if and only if $v_iv_j \notin E(G)\}$. Classical graph completion can be obtained by the rule: Consider $G = (V(G),E(G))$ and derive $E(\overline{G})$. Then construct the edge induced graph $\langle E(G) \cup E(\overline{G})\rangle$. It is easy to see that if the \emph{complement completion rule i.e. derive $E(\overline{G})$ then construct $\langle E(G) \cup E(\overline{G})\rangle$} is considered as a global iteration on $G$  then classical graph completion requires only one global iteration to reach completeness. Note that the global iteration consists of several local iterations. Other graph completion rules may require more than one global iteration to reach completeness. To complicate matters some rules can result in different outcomes (or non-isomorphic resultant graphs). Herein lies the endeavor to optimize the application of the graph completion rule. The next section introduces the notion of Marcello's completion of a graph $G$.
 
\section{Marcello's completion of a graph}
Consider a non-complete graph $G = (V(G), E(G))$ of order $n \geq 3$ with $E(G)$ a non-empty set. Note that Definition \ref{def1} below provides for a single global iteration to obtain $G_1$ from $G_0 = G$. With repeated applications it is possible to obtain $G_i$ from $G_{i-1}$, $i = 2,3,4,\dots$ Imbedded in a single global iteration a number of local iterations are performed which consider a subset of the vertices of the applicable graph. The final global iteration which renders a complete graph terminates the repetition of the iterative procedure. Note that a repetition of Definition \ref{def1} is called a Marcello global iteration applied to the applicable $G_i$, $i = 0,1,2,\dots$ resulting in the graph $G_{i+1}$. Assume that from $G_0 = G$ the graphs $G_1,\dots,G_{i-1}$ are constructed where $G_{i-1} \neq K_n$.
\begin{definition}\label{def1}
Let $H_{i-1} = G_{i-1}$. Let $X_{i-1} \subseteq V(G_{i-1})$ where $X_{i-1} = \{v_j: 0 < deg_{G_{i-1}}(v_j) < n-1\}$ and $\ell = |X_{i-1}|$. 
 
For $k = 1,2,3,\dots, \ell$ go to Step 1: 

Step 1:  Let $j = k$ and select any vertex $v_{t_j} \in X_{i-1}$, ($t_j = s$ for some $v_s \in X_{i-1}$). Go to Step 2.

Step 2: If possible, add at most $deg_{G_{i-1}}(v_{t_j})$ edges (called, Marcello edges) to non-neighbors of $v_{t_j}$ in $H_{j-1}$. Label the resultant graph $H_j$. If $H_j$ is complete then label it as $G_i$ and exit ($G$ has been completed). Else, $X_{i-1}\mapsto X_{i-1}\backslash \{v_{t_j}\}$. Go to step 3.

Step 3: If $X_{i-1} = \emptyset$ or if all vertices remaining, say $v_q \in X_{i-1}$ has $deg_{H_j}(v_q) = n-1$ then label it as $G_i$ and exit (a global iteration completed). Else, consider the next $k$ and go to Step 1 (a local iteration completed).\\
\end{definition}
In general, if the vertex $v_i$ is under consideration (a local iteration) and $v_i$ accepts a non-adjacent vertex $v_j$ as a neighbor then the corresponding Marcello edge is denoted by $v^+_iv_j$. Although the notation suggests pseudo orientation the \lq orientation\rq  will not be depicted and the Marcello edges will be dashed. Since the number of graphs of order $n$ is finite, a finite number of global iterations will result in a complete graph. The minimum number of global iterations required to reach a complete graph $K_n$ is called the \emph{Marcello number} of the graph $G$ and is denoted by $\varpi(G)$. A sequence of graphs, say $G_1$,$G_2$,$G_3$,$\cdots, G_k$ is called a \emph{Marcello sequence} of $G$.
\begin{claim}\label{clm1}
If graph $G$ has $\varpi(G) = k$ then firstly, a Marcello sequence of graphs, say $G_1$,$G_2$,$G_3$,$\cdots, G_k$ exists such that,
\begin{center}
$\varpi(G_1) = k-1$, $\varpi(G_2) = k-2$, $\varpi(G_3) = k-3$, $\cdots, \varpi(G_k) = 0$.
\end{center}
Secondly, a Marcello sequence of a graph is not necessarily unique. Two or more sequences of a graph $G$, say
\begin{center}
$G_1$,$G_2$,$G_3$,$\cdots, G_k$ and $G^*_1$,$G^*_2$,$G^*_3$,$\cdots, G^*_k$
\end{center}
may exist which result in completion such that there exists at least one pair $G_i \ncong G^*_i$. 
\end{claim}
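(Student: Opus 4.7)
The plan is to handle the two assertions separately. For the first, I would use a straightforward extremality argument based on the definition of $\varpi$. Starting from any Marcello sequence $G = G_0, G_1, \dots, G_k$ witnessing $\varpi(G) = k$, observe that for each $i$ the tail $G_i, G_{i+1}, \dots, G_k$ is itself a Marcello sequence for $G_i$ terminating in $K_n$, so $\varpi(G_i) \leq k - i$. Conversely, if $\varpi(G_i) < k - i$ for some $i$, splicing a minimum Marcello sequence from $G_i$ after the initial segment $G_0, \dots, G_i$ would produce a completing sequence for $G$ of length strictly less than $k$, contradicting $\varpi(G) = k$. Hence equality $\varpi(G_i) = k - i$ holds for every $i \in \{1, \dots, k\}$.

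The second assertion is an existence statement, and I would prove it by exhibiting one explicit graph together with two Marcello sequences whose $i$th graphs are non-isomorphic for some $i$. Definition \ref{def1} carries two natural sources of freedom: (a) the choice of $v_{t_j} \in X_{i-1}$ in Step~1, and (b) the choice of which, and how many (up to $\deg_{G_{i-1}}(v_{t_j})$), non-neighbors to accept as Marcello edges in Step~2. Either source should generically yield non-isomorphic intermediate graphs. I would look for a small non-complete starting graph---a path $P_n$ for small $n$, or a short cycle augmented by a pendant vertex, are natural candidates---possessing two structurally distinct local choices at the first global iteration. I would then run Definition \ref{def1} in full along each choice, record the resulting $G_1$ and $G_1^*$, and verify non-isomorphism by comparing an isomorphism invariant such as the degree sequence.

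The principal effort is in the second assertion; the extremality argument for the first is immediate. The delicate points in the construction are twofold. First, one must confirm that each divergent run is a legitimate Marcello sequence, meaning that Step~2's cap of $\deg_{G_{i-1}}(v_{t_j})$ edges (computed from the graph entering the global iteration, not from $H_{j-1}$) is respected throughout. Second, one must rule out the coincidence that the two local choices give isomorphic outputs. For very small $n$ the latter is the harder concern because there are few isomorphism invariants available to separate the candidates, so I would prefer an example with $n$ large enough that the degree sequence alone distinguishes $G_1$ from $G_1^*$.
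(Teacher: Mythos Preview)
The paper states Claim~\ref{clm1} without proof or motivation; it is treated as self-evident and immediately used. Your extremality argument for the first assertion is exactly the right justification and is complete as written: tails of a minimum sequence give the upper bound $\varpi(G_i)\le k-i$, and splicing a shorter completion of $G_i$ onto the initial segment gives the matching lower bound.

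For the second assertion your plan is sound but two points deserve tightening. First, since both displayed sequences have length $k=\varpi(G)$ and end at $K_n$, any witnessing example must have $k\ge 2$; in particular the small paths $P_3,\dots,P_6$ you mention all have $\varpi=1$ (Lemma~\ref{lem1}), so their only length-$1$ sequence is $G_1=K_n$ and no non-isomorphic pair can arise. You therefore need a graph with $\varpi\ge 2$. Second, after producing non-isomorphic $G_1$ and $G_1^*$ you must still verify that \emph{each} can be continued to $K_n$ in exactly $k-1$ further global iterations; a legitimate run of Definition~\ref{def1} need not be optimal, and a non-optimal $G_1^*$ would not sit inside a length-$k$ sequence. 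The paper itself supplies a ready witness in Section~4 (Problem~2): for $G=P_2\cup 3K_1$ one has $\varpi(G)=3$, the first global iteration yields (up to isomorphism) either $K_3\cup 2K_1$ or $P_4\cup K_1$, these are non-isomorphic, and the paper confirms both have Marcello number $2$, so each extends to a length-$3$ completing sequence.
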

From Claim \ref{clm1} it follows naturally that $\varpi(K_n) = 0$, $n \geq 1$. For a null graph (or edgeless graph) of order $n \geq 2$ and denoted by $\mathfrak{N}_n$ we define $\varpi(\mathfrak{N}_n) = \infty$. Since $K_1$ can be considered as complete or null the unambiguous lower bound $n \geq 2$ is required for the null graph.

Consider the path $P_6$ in Figure \ref{fig1}. Assume that Marcello's completion runs through the local iterations by sequentially considering the vertices $v_2, v_4, v_3, v_5, v_6, v_1$. Although not unique, a maximum number of permissible Marcello edges to be added are $v^+_2v_4$, $v^+_2v_6$; $v^+_4v_1$, $v^+_4v_6$; $v^+_3v_5$, $v^+_3v_6$; $v^+_5v_1$, $v^+_5v_2$; $v^+_6v_1$; $v^+_1v_3$. See Figure \ref{fig2}. It is worthy to note that if a graph $G$ has $\varpi(G) = 1$ not all applications of Definition \ref{def1} will necessarily lead to completeness. For example, with regards to Figure \ref{fig1} although not unique, a maximal number of permissible Marcello edges to be added could be $v^+_1v_4$, $v^+_2v_4$; $v^+_2v_5$; $v^+_3v_5$, $v^+_3v_6$; $v^+_4v_6$; $v^+_5v_1$; $v^+_6v_2$. See Figure \ref{fig3}.

\begin{figure}[h]
\centering
\begin{tikzpicture}[scale=0.6]
\tikzstyle{every node}=[draw,shape=circle];
\node (v0) at (0:3) [] {$v_1$};
\node (v1) at (60:3) [] {$v_2$};
\node (v2) at (2*60:3) [] {$v_3$};
\node (v3) at (3*60:3) [] {$v_4$};
\node (v4) at (4*60:3) [] {$v_5$};
\node (v5) at (5*60:3) [] {$v_6$};
\draw (v0) -- (v1)
(v0) -- (v1)
(v1) -- (v2)
(v2) -- (v3)
(v3) -- (v4)
(v4) -- (v5)
;
\end{tikzpicture}
\caption{Graph $P_6$ for which $\varpi(P_6) = 1.$}\label{fig1}
\end{figure}
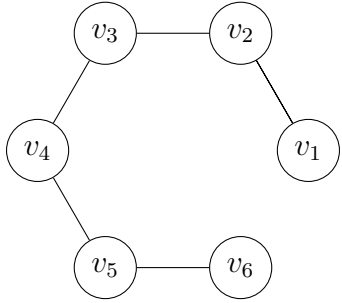

\begin{figure}[h]
\centering
\begin{tikzpicture}[scale=0.6]
\tikzstyle{every node}=[draw,shape=circle];
\node (v0) at (0:3) [] {$v_1$};
\node (v1) at (60:3) [] {$v_2$};
\node (v2) at (2*60:3) [] {$v_3$};
\node (v3) at (3*60:3) [] {$v_4$};
\node (v4) at (4*60:3) [] {$v_5$};
\node (v5) at (5*60:3) [] {$v_6$};
\draw (v0) -- (v1)
(v1) -- (v2)
(v2) -- (v3)
(v3) -- (v4)
(v4) -- (v5)
;
\draw[dashed] (v3) -- (v5)
(v0) -- (v2)
(v0) -- (v3)
(v0) -- (v4)
(v0) -- (v5)
(v1) -- (v3)
(v1) -- (v4)
(v1) -- (v5)
(v2) -- (v4)
(v2) -- (v5)
;
\end{tikzpicture}
\caption{Graph $P_6$ for which one global iteration yields $K_6$.}\label{fig2}
\end{figure}
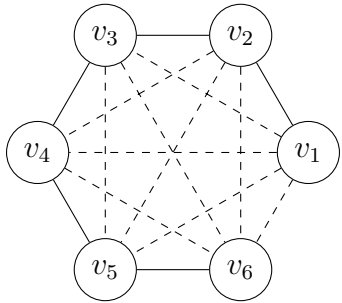

\begin{figure}[h]
\centering
\begin{tikzpicture}[scale=0.6]
\tikzstyle{every node}=[draw,shape=circle];
\node (v0) at (0:3) [] {$v_1$};
\node (v1) at (60:3) [] {$v_2$};
\node (v2) at (2*60:3) [] {$v_3$};
\node (v3) at (3*60:3) [] {$v_4$};
\node (v4) at (4*60:3) [] {$v_5$};
\node (v5) at (5*60:3) [] {$v_6$};
\draw (v0) -- (v1)
(v1) -- (v2)
(v2) -- (v3)
(v3) -- (v4)
(v4) -- (v5)
;
\draw[dashed] (v3) -- (v5)
(v0) -- (v3)
(v0) -- (v4)
(v1) -- (v3)
(v1) -- (v4)
(v1) -- (v5)
(v2) -- (v4)
(v2) -- (v5)
;
\end{tikzpicture}
\caption{Graph $P_6$ for which one global iteration did not yield $K_6$.}\label{fig3}
\end{figure}
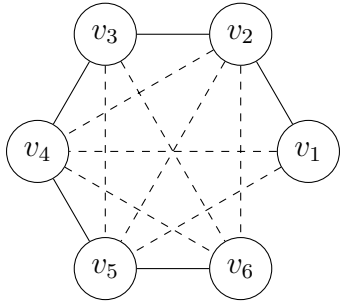

The above brings us to an important observation.
\begin{note}
Definition \ref{def1} is defined such that termination (exiting) is guaranteed. The aforesaid is valid because the order of the graph is finite. However, Marcello's completion is not well-defined. The aforesaid is true because the outcome of a global iteration is not necessarily unique. The aforesaid brings us to an somewhat obscure theorem.
\end{note}
\begin{theorem}\label{thm1}
Let $G$ be a non-complete and non-null graph of order $n \geq 3$. If $\varpi(G) = 1$ then
\begin{center}
$\sum\limits_{0<deg_G(v_i) < n-1}deg_G(v_i) + |E(G)| \geq \frac{n(n-1)}{2}$.
\end{center}
\end{theorem}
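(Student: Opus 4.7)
The plan is to read the claimed inequality as a budget constraint on the Marcello edges inserted during the single global iteration that completes $G$. Since $\varpi(G)=1$, one application of Definition \ref{def1} to $G_0=G$ must produce $K_n$, so the number of Marcello edges inserted during this iteration equals exactly $\binom{n}{2}-|E(G)|$. I will bound this same quantity from above by $\sum_{v\in X_0}deg_G(v)$, which is precisely the sum in the theorem, and then rearrange.

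First I would unpack Definition \ref{def1} in the case $i=1$. Here $G_{i-1}=G_0=G$, so the active set is $X_0=\{v_j:0<deg_G(v_j)<n-1\}$, whose index set matches the one in the theorem. Moreover the cap in Step 2 on the number of Marcello edges introduced during the local iteration at $v_{t_j}$ is $deg_{G_0}(v_{t_j})=deg_G(v_{t_j})$; that is, it is referenced to the original graph rather than to the intermediate graph $H_{j-1}$. Each vertex of $X_0$ hosts at most one local iteration, since Step 3 removes it from the active set thereafter, and no local iteration is run at any vertex outside $X_0$.

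Next I would attribute every Marcello edge to a unique initiating vertex using the notation $v_i^+v_j$: the edge is booked against $v_i$ (the vertex whose local iteration added it) and not against $v_j$. To rule out double counting I need only observe that once $v_i^+v_j$ is placed, $v_i$ and $v_j$ are adjacent in $H_j,H_{j+1},\dots$, so if the local iteration at $v_j$ comes afterwards in the same global pass, $v_i$ is no longer a non-neighbor of $v_j$ and the same edge is not reinserted as $v_j^+v_i$. Summing the per-vertex caps therefore produces the upper bound $\sum_{v\in X_0}deg_G(v)$ on the total number of Marcello edges placed, and combining with the exact count $\binom{n}{2}-|E(G)|$ yields the inequality after rearrangement.

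The argument is essentially one of bookkeeping, and I do not expect a substantial obstacle. The only point that warrants a second look is the no-double-counting observation, which follows directly from the fact that Marcello edges are added to \emph{non-neighbors} in the current $H_{j-1}$. It is the choice in Step 2 to reference the original degree $deg_{G_{i-1}}$ rather than the current degree $deg_{H_{j-1}}$ that makes the per-vertex cap tight enough to yield the stated bound.
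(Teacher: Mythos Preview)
Your argument is correct and follows the same approach as the paper: both observe that the number of Marcello edges added in one global iteration is bounded above by $\sum_{v\in X_0}\deg_G(v)$, and that reaching $K_n$ forces this to cover the deficit $\binom{n}{2}-|E(G)|$. Your version is simply more explicit about the bookkeeping (single local iteration per vertex, cap referenced to $G_0$, no double counting), whereas the paper asserts the bound on the edge count without elaboration.
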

\begin{proof}
Firstly the bound $n \geq 3$ is trivially clear. Furthermore, the maximum number of edges that can by added through one global iteration of Definition \ref{def1} is given by,
\begin{center}
$\sum\limits_{0<deg_G(v_i) < n-1}deg_G(v_i)$.
\end{center}
Since $\varpi(G) = 1$ it implies completeness has been reached. Hence, $G_1 \cong K_n$. The latter settles the result since $|E(G)| < \frac{n(n-1)}{2}$.
\end{proof}
\begin{corollary}\label{cor1}
If $\sum\limits_{0<deg_G(v_i) < n-1}deg_G(v_i) + |E(G)| < \frac{n(n-1)}{2}$ then
\begin{center}
$\varpi(G) \geq 2$.
\end{center}
\end{corollary}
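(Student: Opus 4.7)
The plan is to observe that Corollary \ref{cor1} is essentially the contrapositive of Theorem \ref{thm1}, with one small additional check required to rule out the boundary case $\varpi(G)=0$. So the bulk of the argument is just reading off what Theorem \ref{thm1} already gives us.

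First I would invoke Theorem \ref{thm1} directly: it asserts that $\varpi(G) = 1$ forces $\sum_{0 < \deg_G(v_i) < n-1} \deg_G(v_i) + |E(G)| \geq \frac{n(n-1)}{2}$. Under the strict hypothesis of the corollary this inequality fails, so we may immediately conclude $\varpi(G) \neq 1$.

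Next I would rule out the remaining case $\varpi(G) = 0$. By convention $\varpi(G) = 0$ holds precisely when $G \cong K_n$. But if $G$ is complete, then every vertex $v_i$ satisfies $\deg_G(v_i) = n-1$, so the index set $\{v_i : 0 < \deg_G(v_i) < n-1\}$ is empty and the corresponding sum is $0$, while $|E(G)| = \frac{n(n-1)}{2}$. The left-hand side of the hypothesis would then equal $\frac{n(n-1)}{2}$, contradicting the strict inequality. Hence $G \not\cong K_n$, giving $\varpi(G) \neq 0$.

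Combining the two exclusions yields $\varpi(G) \geq 2$, with the understanding that the edgeless case $\varpi(\mathfrak{N}_n) = \infty$ also trivially satisfies the conclusion. The only even mildly subtle point is noticing that the strict inequality, rather than a weak one, is what eliminates the $G \cong K_n$ case; there is no real obstacle here, since everything follows from Theorem \ref{thm1} and the convention $\varpi(K_n) = 0$ recorded earlier in the paper.
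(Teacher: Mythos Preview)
Your proposal is correct and matches the paper's intended approach: the corollary is stated in the paper without an explicit proof, being the immediate contrapositive of Theorem~\ref{thm1}. Your additional care in explicitly ruling out the boundary case $\varpi(G)=0$ (i.e.\ $G\cong K_n$) via the strict inequality is a welcome clarification that the paper leaves implicit, since the standing hypothesis there is already that $G$ is non-complete and non-null.
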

Observe that if $deg_G(v_i) < n-1$ $\forall~i$ then the left-hand side of each inequality in Theorem \ref{thm1} and Corollary \ref{cor1} can be replaced by $3|E(G)|$. The converse of Theorem \ref{thm1} and Corollary \ref{cor1} are not always valid. Consider the graph below.
\begin{figure}[h]
\centering
\begin{tikzpicture}[scale=0.6]
\tikzstyle{every node}=[draw,shape=circle];
\node (v0) at (0:3) [] {$v_1$};
\node (v1) at (60:3) [] {$v_2$};
\node (v2) at (2*60:3) [] {$v_3$};
\node (v3) at (3*60:3) [] {$v_4$};
\draw (v0) -- (v1)
(v1) -- (v2)
;
\end{tikzpicture}
\caption{Graph for which one global iteration cannot yield completeness.}\label{fig4}
\end{figure}
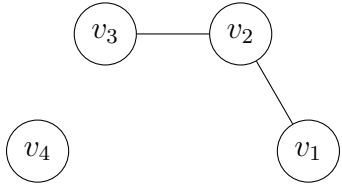

Despite that fact that
\begin{center}
$\sum\limits_{0<deg_G(v_i) < n-1}deg_G(v_i) + |E(G)| = 6 \geq \frac{n(n-1)}{2} = 6$,
\end{center}
the graph cannot be completed in one global iteration. Up to equivalence of options (or up to isomorphism of resultant graphs) only two options avail during the first global iteration. The options are either $v^+_1v_3$, $v^+_2v_4$, $v^+_3v_4$ or $v^+_1v_4$, $v^+_2v_4$, $v^+_3v_4$. In both cases the resultant $G_1$$^{,s}$ are non-complete. See Figures 5 and 6 below.

\begin{figure}[h]
\centering
\begin{tikzpicture}[scale=0.6]
\tikzstyle{every node}=[draw,shape=circle];
\node (v0) at (0:3) [] {$v_1$};
\node (v1) at (60:3) [] {$v_2$};
\node (v2) at (2*60:3) [] {$v_3$};
\node (v3) at (3*60:3) [] {$v_4$};
\draw (v0) -- (v1)
(v1) -- (v2)
;
\draw[dashed] (v0) -- (v2)
(v1) -- (v3)
(v2) -- (v3)
;
\end{tikzpicture}
\caption{}
\end{figure}

\begin{figure}[h]
\centering
\begin{tikzpicture}[scale=0.6]
\tikzstyle{every node}=[draw,shape=circle];
\node (v0) at (0:3) [] {$v_1$};
\node (v1) at (60:3) [] {$v_2$};
\node (v2) at (2*60:3) [] {$v_3$};
\node (v3) at (3*60:3) [] {$v_4$};
\draw (v0) -- (v1)
(v1) -- (v2)
;
\draw[dashed] (v0) -- (v3)
(v1) -- (v3)
(v2) -- (v3)
;
\end{tikzpicture}
\caption{}
\end{figure}

Henceforth, all graphs $G$ will be non-complete and non-null to avoid ambiguity. The aforesaid does not exclude the possibility that a graph may be disconnected and has a null graph component. So a graph $G = H \cup \mathfrak{N}_n$ is permitted provided that $H$ is connected. The graph $H$ may be complete because $G$ remains non-complete. Two self-evident claims will be useful.
\begin{claim}\label{clm2}
If $\varpi(H) = k$ and $H$ is a spanning subgraph of $G$ then $\varpi(G) \leq k$.
\end{claim}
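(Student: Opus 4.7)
The plan is to fix a Marcello sequence $H_0 = H, H_1, \ldots, H_k = K_n$ that realises $\varpi(H)=k$ and to construct a Marcello sequence $G_0 = G, G_1, \ldots, G_k$ for $G$ inductively, maintaining the invariant $E(H_i) \subseteq E(G_i)$ at every stage. Since $E(H_k) = E(K_n)$, the invariant at $i=k$ forces $G_k \cong K_n$ and therefore $\varpi(G) \leq k$. The base case is immediate: $H$ being a spanning subgraph of $G$ gives $E(H_0) = E(H) \subseteq E(G) = E(G_0)$.

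For the inductive step, assume $E(H_{i-1}) \subseteq E(G_{i-1})$, and mimic the global iteration from $H_{i-1}$ to $H_i$ on $G_{i-1}$. Process the vertices in the same order $v_{t_1},\ldots,v_{t_\ell}$ that was used on $H_{i-1}$. Track the running graph $G^{(j)}_{i-1}$ after the $j$-th local iteration (with $G^{(0)}_{i-1}=G_{i-1}$) and similarly $H^{(j)}_{i-1}$, and strengthen the inductive hypothesis to $E(H^{(j)}_{i-1}) \subseteq E(G^{(j)}_{i-1})$. When $H$ added a Marcello-edge set $F_j$ at the vertex $v_{t_j}$, let the corresponding Marcello-edge set for $G$ be $F'_j := F_j \setminus E(G^{(j-1)}_{i-1})$. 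If $v_{t_j}$ is already of degree $n-1$ in $G^{(j-1)}_{i-1}$ then $F_j \subseteq E(G^{(j-1)}_{i-1})$ and $F'_j = \emptyset$, so no edges are added, which is permitted by the phrase \emph{at most} in Step 2 of Definition \ref{def1}; otherwise, $v_{t_j}\in X^G_{i-1}$ and we add $F'_j$. In both cases $E(H^{(j)}_{i-1}) = E(H^{(j-1)}_{i-1}) \cup F_j \subseteq E(G^{(j-1)}_{i-1}) \cup F'_j = E(G^{(j)}_{i-1})$.

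It remains to check that $F'_j$ respects the Marcello rule on $G$. The edges of $F'_j$ all join $v_{t_j}$ to vertices that are non-neighbors of $v_{t_j}$ in $H^{(j-1)}_{i-1}$ and in $G^{(j-1)}_{i-1}$ (because those already adjacent in $G^{(j-1)}_{i-1}$ have been removed). The cardinality bound is satisfied because $H\subseteq G$ gives $\deg_{H_{i-1}}(v_{t_j}) \leq \deg_{G_{i-1}}(v_{t_j})$, whence
\[
|F'_j| \;\leq\; |F_j| \;\leq\; \deg_{H_{i-1}}(v_{t_j}) \;\leq\; \deg_{G_{i-1}}(v_{t_j}).
\]
At the end of the $\ell$ local iterations we have $E(H_i) \subseteq E(G_i)$, completing the induction.

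The only delicate point is reconciling the local vertex sets $X^H_{i-1}$ and $X^G_{i-1}$, which need not coincide: a vertex may be of full degree in $G_{i-1}$ while still lying in $X^H_{i-1}$, or it may lie in $X^G_{i-1}$ without ever being processed by the $H$-simulation. Both situations are harmless, since vertices already complete in $G$ simply receive an empty Marcello-edge set under our construction, and vertices in $X^G_{i-1}\setminus X^H_{i-1}$ can be assigned the empty set as their Marcello contribution in the $G$-iteration. This is the whole content of the argument; no further obstacles arise.
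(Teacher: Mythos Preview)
Your argument is a careful formalisation of what the paper offers only as a one-line \emph{Motivation} (``Graph $G$ has the possible advantage of having edges not in $H$''), and it captures exactly the intended monotonicity idea: maintain $E(H_i)\subseteq E(G_i)$ by mimicking the $H$-iteration on $G$. In that sense you and the paper are on the same track; you have simply done the work the paper leaves implicit.

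One subtlety worth flagging: you justify possibly adding no edges at a local step by quoting the phrase ``at most'' in Step~2 of Definition~\ref{def1}. Elsewhere, however, the paper insists that each local step adds a \emph{maximal} set of Marcello edges (see the abstract, and the Conclusion, which states that ``only maximality \ldots\ is guaranteed by Definition~\ref{def1}''); the worked examples in Figures~\ref{fig2} and~\ref{fig3} are both maximal at every local step. Under that stricter reading your sets $F'_j$, as well as the empty contributions you assign to vertices in $X^G_{i-1}\setminus X^H_{i-1}$, may fail to be maximal and hence may not constitute a legal global iteration on $G$. The repair is immediate and does not disturb your invariant: enlarge each $F'_j$ by further Marcello edges until it is maximal, and process the leftover vertices of $X^G_{i-1}$ with arbitrary maximal contributions; since edges are only being added on the $G$-side, the containment $E(H_i)\subseteq E(G_i)$ is preserved.
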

\textit{Motivation:} Graph $G$ has the possible advantage of having edges not in $H$.
\begin{claim}\label{clm3}
Consider graph $G$ of order $n$. Let $V^* = \{v_i: deg_G(v_i) = n-1\}$. Consider the induced subgraph $H = \langle V(G)\backslash V^*\rangle$. If $\varpi(H) = k$ then $\varpi(G) \leq k$.
\end{claim}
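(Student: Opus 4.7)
The plan is to mirror a Marcello sequence for $H$ inside $G$. The trivial case $V^* = \emptyset$ gives $H = G$, so the claim is immediate; assume $|V^*| \geq 1$. Every $u \in V^*$ has $deg_G(u) = n-1$, hence is adjacent in $G$ to every other vertex, so the edges inside $V^*$ together with the edges between $V^*$ and $V(H)$ already form a complete join in $G$. Only the edges inside $V(H)$ are potentially missing.

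Fix any Marcello sequence $H = H_0, H_1, \ldots, H_k = K_{|V(H)|}$ realising $\varpi(H) = k$. I will construct a parallel sequence $G = G_0, G_1, \ldots, G_k$ by performing in $G$, during each global iteration $i$, exactly the same local iterations that occur in the $H$-sequence: select the same sources $v_{t_j} \in X_{i-1}(H)$ in the same order and add the same Marcello edges inside $V(H)$; at every other vertex of $X_{i-1}(G)$ add zero Marcello edges. A straightforward induction on $i$ maintains the invariant that the subgraph of $G_{i-1}$ induced on $V(H)$ equals $H_{i-1}$, while all edges of $G_{i-1}$ meeting $V^*$ coincide with those of $G_0 = G$. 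In particular, each mirrored source satisfies $deg_{G_{i-1}}(v_{t_j}) = deg_{H_{i-1}}(v_{t_j}) + |V^*|$, which is strictly between $0$ and $n-1$ by Definition \ref{def1} applied to $H$, and certainly large enough to accommodate the at most $deg_{H_{i-1}}(v_{t_j})$ Marcello edges one wishes to place at $v_{t_j}$.

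After $k$ global iterations, the invariant together with $H_k \cong K_{|V(H)|}$ forces every pair in $V(H)$ to be adjacent in $G_k$; combined with the complete join to $V^*$ inherited from $G$, this yields $G_k \cong K_n$, whence $\varpi(G) \leq k$.

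The main obstacle is that Marcello's completion is not well defined and $X_{i-1}(G)$ can be strictly larger than the set of sources actually used in the $H$-sequence: for instance, a vertex of $V(H)$ that is isolated in $H_{i-1}$ but joined to $V^*$ in $G$ is inert in the $H$-procedure yet eligible as a source in $G$. Mirroring still succeeds because Step 2 of Definition \ref{def1} allows adding \emph{at most} $deg_{G_{i-1}}(v_{t_j})$ Marcello edges (and in particular zero), so we may simply decline to act at such a vertex and remain synchronised with the $H$-sequence throughout every global iteration.
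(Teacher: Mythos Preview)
Your argument is correct and is a fully rigorous version of the paper's own justification, which consists only of the single-sentence motivation that $deg_G(v_i) \geq deg_H(v_i)$ for all vertices common to $G$ and $H$. You flesh out exactly this idea---mirroring an optimal $H$-sequence inside $G$ using the larger degree budget---and your final paragraph correctly anticipates and disposes of the one subtlety (extra eligible sources in $X_{i-1}(G)$) by invoking the ``at most'' clause of Definition~\ref{def1}.
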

\textit{Motivation:} With regards to vertex degrees we have that $deg_G(v_i) \geq deg_H(v_i)$, for all vertices common to $G$ and $H$.
\begin{claim}\label{clm4}
Consider graph $G$ of order $n$. If $G$ has two or more isolated vertices then $\varpi(G) \geq 2$.
\end{claim}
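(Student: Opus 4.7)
The plan is to examine how Definition \ref{def1} treats isolated vertices and to show that the edge between two such vertices cannot possibly be created during a single global iteration. This will give $G_1 \ncong K_n$, hence $\varpi(G) \geq 2$.

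First I would unpack the definition of the set $X_{i-1}$. At the start of the very first global iteration we have $G_0 = G$ and $X_0 = \{v_j : 0 < deg_G(v_j) < n-1\}$. An isolated vertex $v$ of $G$ satisfies $deg_G(v) = 0$, so $v \notin X_0$. If $v$ and $w$ are the two hypothesised isolated vertices, then neither belongs to $X_0$.

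Next I would observe the crucial monotonicity of $X_{i-1}$ within a global iteration: Step 2 and Step 3 only ever perform the assignment $X_{i-1} \mapsto X_{i-1}\setminus \{v_{t_j}\}$, so elements are removed but never added. Consequently, even if some vertex receives Marcello edges during the iteration and becomes non-isolated in some $H_j$, it will still not be placed into $X_{i-1}$ and will never be chosen as a source vertex $v_{t_j}$ in that global iteration. Combined with Step 1, the source of every Marcello edge added during the first global iteration must have belonged to $X_0$.

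Then I would invoke the fact that every Marcello edge is of the form $v^+_{t_j}v_r$, i.e.\ it is incident with the chosen source $v_{t_j} \in X_{i-1}$. Therefore, for $vw$ to be added as a Marcello edge during the first global iteration, at least one of $v,w$ would have to be chosen as a source, hence would have to lie in $X_0$. But $v,w \notin X_0$, so $vw \notin E(G_1)$. Since $vw \notin E(K_n)^c$ would be required for completion, we conclude $G_1 \ncong K_n$, and by definition of the Marcello number $\varpi(G) \geq 2$.

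The main obstacle is purely one of careful reading rather than mathematical depth: one must verify that the definition does not silently promote a vertex into $X_{i-1}$ once its degree increases inside the same global iteration, for otherwise the argument breaks (an isolated vertex could be brought into play after first receiving an edge, and then become a source for the edge $vw$). A secondary small point worth recording is that no appeal to the existence of a legal sequence of Marcello edges is needed; the obstruction is that the edge $vw$ is simply unreachable in one global iteration, independently of how the choices in Steps 1--3 are resolved.
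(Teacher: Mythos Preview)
Your proof is correct and follows the same idea as the paper's own one-line motivation, namely that an isolated vertex cannot serve as the source of a Marcello edge during the first global iteration, so the edge between the two isolated vertices is never created. You have simply fleshed out the argument with the explicit observation that $X_0$ excludes degree-$0$ vertices and only shrinks during the iteration, which the paper leaves implicit.
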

\textit{Motivation:} On the first global iteration an isolated vertex $v_i$ cannot yield an edge $v^+_iv_j$. See the Marcello rule.
\begin{claim}\label{clm5}
Consider graph $G$ of order $n$. If $G$ has four or more pendant vertices then $\varpi(G) \geq 2$.
\end{claim}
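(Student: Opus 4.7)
I plan to argue by contradiction: assume $\varpi(G) = 1$, so a single Marcello global iteration of $G$ yields $K_n$, and fix four pendants $u_1, u_2, u_3, u_4$ of $G$. The fundamental bound I will use is that, by Step~2 of Definition~\ref{def1}, each pendant $u_i$ can initiate at most $deg_G(u_i) = 1$ Marcello edge; hence the four pendants collectively have a pendant-initiation budget of at most $4$ in this iteration.

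Next I count the pendant-pendant edges the iteration is forced to create. Because every pendant has exactly one neighbor in $G$, the subgraph of $G$ induced on $\{u_1, u_2, u_3, u_4\}$ is a matching, containing at most $\lfloor 4/2 \rfloor = 2$ edges; thus at least $\binom{4}{2} - 2 = 4$ edges inside the pendant set must be added as Marcello edges during the first global iteration. The crucial observation is that a Marcello edge $v^+w$ is always incident to its initiator $v$, so an edge between two pendants can only be added when one of those two pendants is the initiator. Therefore every missing pendant-pendant edge consumes one unit of the pendant-initiation budget.

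Combining the two counts already gives a contradiction in the generic case where the pendants do not pair into two matching edges of $G$, since then strictly more than $4$ pendant-pendant initiations would be required against a budget of only $4$. The main obstacle in my plan is the tight sub-case in which the four pendants form exactly two matched pairs in $G$: here the pendant-only counts balance exactly, and no pendant can spare an initiation to reach any non-pendant. To close this sub-case I would carry out a secondary count on the non-pendant side, pitting the total non-pendant initiation budget $\sum_{v \notin L} deg_G(v)$ against the combined demand of (i) the $4(n-4)$ pendant-to-non-pendant edges that each $u_i$ still needs, and (ii) the edges missing inside $V(G) \setminus L$, using the Marcello cap $\min\{deg_G(v),\,4\}$ on each non-pendant's contribution into $L$ to obtain the required contradiction. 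This final bookkeeping is the delicate part, but the pendant-initiation argument in the first two paragraphs supplies the main backbone.
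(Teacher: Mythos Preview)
Your setup is more careful than the paper's one--line motivation, which simply asserts that ``$t\geq 4$ pendant vertices can at best yield a cycle $C_t$'' and stops.  You have correctly isolated the only configuration the raw budget count does not kill outright: four pendants arranged as two matched $P_2$ pairs.

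The difficulty is that this tight sub-case cannot be closed, because the claim as literally stated is false there.  Take $G=2P_2$, so $n=4$ and all four vertices are pendants.  Running Definition~\ref{def1} with the initiations $u_1^{+}u_3$, $u_3^{+}u_2$, $u_2^{+}u_4$, $u_4^{+}u_1$ produces $K_4$ after a single global iteration, hence $\varpi(2P_2)=1$.  More generally $G=2P_2\cup K_m$ with $m\geq 5$ also has exactly four pendants and $\varpi(G)=1$: the pendants complete among themselves as above, and each $K_m$-vertex (degree $m-1\geq 4$) can initiate edges to all four pendants.  Consequently your proposed secondary count on the non-pendant side cannot succeed; in the first example there are no non-pendants to count, and in the second the non-pendants have ample budget.

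What the paper tacitly has in mind is the usual picture of leaves hanging off a larger graph, i.e.\ no two pendants are adjacent (automatic, for instance, whenever $G$ is connected of order $n\geq 3$).  Under that hypothesis there are zero initial pendant--pendant edges, so $\binom{4}{2}=6>4$ such edges must be created against a pendant-initiation budget of $4$, and your first two paragraphs already deliver the contradiction.  In that intended setting your ``generic case'' is the entire argument and the tight sub-case never arises; your plan only goes wrong because you tried to prove more than actually holds.
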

\textit{Motivation:} On the first global iteration $t \geq 4$ pendant vertices can at best yield a cycle $C_t$. See the Marcello rule.

\subsection{The two extremes:} The extreme graphs $K_n$ and $\mathfrak{N}_m$ will be considered. Recall that the join $G+H$ of two graphs $G$ and $H$ is obtained by joining each vertex of $G$ to each vertex of $H$.
\begin{theorem}\label{thm2}
Let $G = K_n\cup K_m$, $n \geq 2$, $m \geq 1$. Then $\varpi(G) = 1$.
\end{theorem}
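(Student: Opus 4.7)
First I would record that $\varpi(G) \geq 1$: since $n \geq 2$ and $m \geq 1$, the graph $G = K_n \cup K_m$ is disconnected on at least $3$ vertices, so $G \not\cong K_{n+m}$, while the component $K_n$ supplies at least one edge so $G$ is non-null. Hence it suffices to exhibit a single Marcello global iteration (one application of Definition \ref{def1}) that terminates at $G_1 \cong K_{n+m}$. Write $U = V(K_n) = \{u_1,\dots,u_n\}$ and $W = V(K_m) = \{w_1,\dots,w_m\}$; every $u_i$ has $deg_G(u_i) = n-1$, every $w_j$ has $deg_G(w_j) = m-1$, and the edges absent from $G$ are exactly the $nm$ pairs in $U \times W$.

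Without loss of generality assume $n \geq m$ and split into two subcases. Subcase (i): $n \geq m+1$ (which subsumes $m = 1$). Each $u_i$ has precisely $m \leq n-1 = deg_G(u_i)$ non-neighbors in $G$, so its Marcello budget accommodates every vertex of $W$. Running the local iterations in the order $u_1, u_2, \dots, u_n$ and having each $u_i$ accept all of $W$ as Marcello neighbors adds every missing edge in one sweep, so Step 3 exits with $G_1 \cong K_{n+m}$.

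Subcase (ii): $n = m \geq 2$. Here each $u_i$ can afford only $n-1$ Marcello edges yet has $n$ non-neighbors, so one has to spend the budget carefully. I would orchestrate the first sweep so that $u_i$ accepts every $w_j$ except $w_i$; Step 2 of Definition \ref{def1} grants exactly this freedom. After processing $u_1,\dots,u_n$, the still-missing edges form the perfect matching $\{u_i w_i : 1 \leq i \leq n\}$. A second sweep through $W$ then closes the gap: when $w_i$ is selected its unique non-neighbor in the current graph is $u_i$, and since $deg_G(w_i) = n-1 \geq 1$, the Marcello edge $w^+_i u_i$ is permissible. This is where the only real obstacle sits: naive choices in the first sweep could concentrate the leftover non-edges at a single $w_j$, stranding some of them beyond the reach of what the second sweep can repair. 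The cyclic skipping rule $u_i \mapsto W \setminus \{w_i\}$ is the bookkeeping trick that prevents this and makes one global iteration suffice, yielding $\varpi(G) \leq 1$ and hence equality.
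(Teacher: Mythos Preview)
Your proof is correct and follows essentially the same two-case strategy as the paper: handle $n>m$ by letting each vertex of the larger clique absorb all of the smaller one, and handle $n=m$ by having the first sweep through $U$ leave a controlled residue that the sweep through $W$ can finish. The only difference is cosmetic bookkeeping in the equal-size case: the paper alternates (odd-indexed $v_i$ skip $u_m$, even-indexed $v_i$ skip $u_1$), which concentrates the leftover non-edges on just two vertices and then requires a parity split into Cases 2(a) and 2(b); your diagonal rule $u_i \mapsto W\setminus\{w_i\}$ spreads the residue as a perfect matching and handles even and odd $n$ uniformly, which is slightly cleaner.
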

\begin{proof}
Let $V(K_n) = \{v_i: i = 1,2,3,\dots,n\}$ and $V(K_m) = \{u_j:j = 1,2,3,\dots,m\}$. Without loss of generality assume $n \geq m$.

Case 1: Let $n > m$. For this case $n-m \geq 1$. By Definition \ref{def1} the following edges can be added on the first global iteration:
\begin{center}
$v^+_1u_j$ for $j = 1,2,3,\dots,m$;

$v^+_2u_j$ for $j = 1,2,3,\dots,m$;

$v^+_3u_j$ for $j = 1,2,3,\dots,m$;

$\vdots$

$v^+_nu_j$ for $j = 1,2,3,\dots,m$.
\end{center}
Clearly, $G_1$ is complete. This settles the result for $n > m$.

Case 2(a): Let $n = m$ and even. By Definition \ref{def1} the edges which can be added on the first global iteration to exhaust the vertices $v_i$, $ i = 1,2,3,\dots,n$ are: 
\begin{center}
$v^+_1u_j$ for $j = 1,2,3,\dots,m-1$;

$v^+_2u_j$ for $j = 2,3,4,\dots,m$;

$v^+_3u_j$ for $j = 1,2,3,\dots,m-1$;

$v^+_4u_j$ for $j = 2,3,4,\dots,m$;

$\vdots$

$v^+_{n-1}u_j$ for $j = 1,2,3,\dots,m-1$;

$v^+_nu_j$ for $j = 2,3,4,\dots,m$.
\end{center}
Clearly, the vertices $u_1,u_m \in V(K_m)$ have not been \lq completed\rq. Each of these vertices may initiate $m-1$ Marcello edges and exactly $\frac{m}{2}$ Marcello edges are required. We determine for which values of $m$ we have $m-1 \geq \frac{m}{2}$. The latter is valid for $m \geq 2$. Hence, Marcello completion is possible in one global iteration. Therefore, $\varpi(G) = 1$.

Case 2(b): For $n = m$ and odd, the proof follows by similar reasoning as that found in the proof of Case 2(a).
\end{proof}
A corollary follows. The proof thereof follows the framework of reasoning found in the proof of Theorem \ref{thm2}. The proof is left for the reader.
\begin{corollary}\label{cor2}
For $G \cup H$ it follows that
\begin{center}
$\varpi(G \cup H) \leq max\{\varpi(G)+1,\varpi(H)+1\}$.
\end{center}
\end{corollary}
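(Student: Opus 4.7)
\textbf{Plan for Corollary \ref{cor2}.}

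My plan is to build, from optimal Marcello sequences of $G$ and of $H$, a Marcello sequence for $G \cup H$ of length at most $\max\{\varpi(G),\varpi(H)\}+1$. Without loss of generality assume $\varpi(H) \le \varpi(G)$ and set $k = \varpi(G)$. By Claim \ref{clm1} fix sequences $G = G_0, G_1, \ldots, G_k = K_{|V(G)|}$ and $H = H_0, H_1, \ldots, H_{\varpi(H)} = K_{|V(H)|}$, padding the latter by $H_i := K_{|V(H)|}$ for $\varpi(H) < i \le k$. The target is to show that the hybrid sequence $G_0 \cup H_0,\; G_1 \cup H_1,\; \ldots,\; G_k \cup H_k$ arises as $k$ consecutive Marcello global iterations on $G \cup H$, and then to invoke Theorem \ref{thm2} to dispose of $G_k \cup H_k = K_{|V(G)|} \cup K_{|V(H)|}$ in one further iteration.

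The inductive step is that $G_i \cup H_i$ is reachable from $G_{i-1} \cup H_{i-1}$ by a single global iteration of Definition \ref{def1}. Since no edge between $V(G)$ and $V(H)$ has yet been drawn, $\deg_{G_{i-1} \cup H_{i-1}}(v) = \deg_{G_{i-1}}(v)$ for every $v \in V(G)$, and symmetrically for $V(H)$. Hence the capacity cap in Step 2 of Definition \ref{def1} matches that of the stand-alone component iteration, and every non-neighbor inside $V(G)$ used by the $G$-iteration remains a legitimate non-neighbor in the union. I would process the vertices of $V(G)$ first, mimicking the $G_{i-1} \to G_i$ iteration edge for edge, then the vertices of $V(H)$, mimicking $H_{i-1} \to H_i$; the outcome is $G_i \cup H_i$ as required.

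The main obstacle is the padded regime $i > \varpi(H)$: the vertices of $H_{i-1} = K_{|V(H)|}$ still lie in the set $X$ of the union (their degree $|V(H)|-1$ is strictly less than $|V(G)|+|V(H)|-1$), yet the plan adds no new edges at such vertices. The wording \textquotedblleft at most $\deg(v_{t_j})$\textquotedblright\ in Step 2, together with the exit condition of Step 3, sanctions contributing zero edges at such a local iteration, which I would invoke. If a stricter reading of the rule were preferred, a fallback is available: any cross edges forced during the padded iterations only enlarge the graph, and Claim \ref{clm2} combined with Theorem \ref{thm2} still delivers completion in one additional iteration, so the bound $\varpi(G \cup H) \le \max\{\varpi(G)+1,\varpi(H)+1\}$ persists in either case.
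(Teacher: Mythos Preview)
Your approach is essentially what the paper has in mind: the paper leaves the proof to the reader, saying only that it ``follows the framework of reasoning found in the proof of Theorem~\ref{thm2}''. Running the optimal Marcello sequences on the two components in parallel for $\max\{\varpi(G),\varpi(H)\}$ rounds and then invoking Theorem~\ref{thm2} on the resulting $K_{|V(G)|}\cup K_{|V(H)|}$ is exactly that framework, and your write-up supplies the details the paper omits. Your discussion of the ``at most'' clause in Step~2 and the fallback via Claim~\ref{clm2} correctly handles the one genuine subtlety (vertices that are complete within their own component but still lie in the set $X$ of the union), so the argument goes through.
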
 
\begin{proposition}\label{prop2}
Let $G = \mathfrak{N}_m + \mathfrak{N}_n$, $n \geq m \geq 1$. Then $\varpi(G) = 1$ if and only if $(m,n) \neq (1,1)$ and $ n \leq 2m+1$. 
\end{proposition}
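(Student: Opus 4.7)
The plan is to identify $G$ with the complete bipartite graph $K_{m,n}$, writing $A$ with $|A|=m$ and $B$ with $|B|=n$ for its parts, noting $\deg_G(a)=n$ for $a\in A$ and $\deg_G(b)=m$ for $b\in B$. The edges missing from $K_{m+n}$ are exactly the $\binom{m}{2}$ pairs inside $A$ and the $\binom{n}{2}$ pairs inside $B$. The degenerate case $(m,n)=(1,1)$ is just $G=K_2$, giving $\varpi(G)=0$, which explains why it must be excluded from the ``$\varpi(G)=1$'' clause.

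For the \emph{necessity} of $n\leq 2m+1$, I would observe that within any single global iteration, a $b$-local iteration can only contribute Marcello edges internal to $B$ since every $a\in A$ is already adjacent to $b$; symmetrically an $a$-local iteration only contributes edges internal to $A$. Each $b$ contributes at most $\deg_G(b)=m$ Marcello edges, so at most $nm$ internal $B$-edges are added per global iteration. Completing $G$ to $K_{m+n}$ requires $\binom{n}{2}$ such edges, so $\varpi(G)=1$ forces $\binom{n}{2}\leq nm$, i.e.\ $n\leq 2m+1$.

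For \emph{sufficiency}, assuming $(m,n)\neq(1,1)$ and $n\leq 2m+1$, I would exhibit an explicit Marcello ordering realizing one global iteration. First process $a_1,\dots,a_m$ in order, at step $i$ adding all edges from $a_i$ to $\{a_\ell:\ell>i\}$; this uses $m-i\leq n$ edges and completes $A$ to $K_m$ (the step is vacuous when $m=1$). Then fix a tournament orientation $D$ of $K_n$ on $B$ with every out-degree at most $m$: a regular tournament when $n$ is odd (out-degree $(n-1)/2\leq m$), and a near-regular tournament when $n$ is even (out-degrees in $\{n/2-1,n/2\}$, where $n/2\leq m$ since $n\leq 2m$). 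Finally process $b_1,\dots,b_n$ in order, at step $j$ adding from $b_j$ the edges to its out-neighbors in $D$; this adds at most $m$ edges per step, and after all $b$-steps every pair inside $B$ is joined, yielding $G_1\cong K_{m+n}$.

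The main obstacle I anticipate is the bookkeeping at the $b$-steps, namely verifying that at step $j$ each out-neighbor of $b_j$ in $D$ is still a non-neighbor of $b_j$ in the current graph $H_{j-1}$. The key observation, which I would write out explicitly, is that at that moment the edge $b_ib_j$ (with $i<j$) is present if and only if the arc $i\to j$ belongs to $D$; hence a $b_k$ with $k<j$ is a non-neighbor of $b_j$ precisely when $j\to k\in D$, while every $b_k$ with $k>j$ is trivially non-adjacent. The existence of the required near-regular tournament is classical (a rotational construction on $\mathbb{Z}_n$ suffices), so once this bookkeeping is discharged the argument is complete.
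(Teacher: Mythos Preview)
Your argument is correct in both directions, and for necessity it is essentially the paper's own argument: both reduce to the edge count $\binom{n}{2}\le nm$ on the $B$-side, the paper routing this through Theorem~\ref{thm1}.

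For sufficiency the two proofs diverge. The paper writes down an explicit cyclic pattern on $B$ (roughly $v_j^+v_{j+1},\dots,v_j^+v_{j+m}$ with indices taken modulo $n$), treats $n=m$ and $m<n\le 2m+1$ as separate sub-cases, and closes each with an induction on $n$ that tracks ``unused degree-counts''. Your tournament construction is more uniform and conceptual: one object (a (near-)regular orientation of $K_n$) handles every admissible pair $(m,n)$ at once, and the verification is a single invariant rather than a case split. What the paper's pattern buys is concreteness---one can read off the Marcello edges directly---while yours buys a cleaner proof.

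One small point to tighten: the Marcello rule, as the paper uses it, forces each local step to add the \emph{maximal} permissible number of edges, namely $\min\bigl(\deg_G(b_j),\text{(number of current non-neighbours)}\bigr)$. When $n<2m+1$ the out-degree of $b_j$ in your tournament $D$ may be strictly less than $m$ while more than that many non-neighbours remain, so you are compelled to add some extra edges beyond the out-neighbours. This does not damage the argument---prioritise out-neighbours first and pad arbitrarily; every arc of $D$ still becomes a graph edge by the end of the $B$-sweep---but your stated invariant ``$b_ib_j$ is present iff $i\to j\in D$'' can fail once extras appear. Replacing it with ``after step $j$, every out-neighbour of $b_j$ in $D$ is a neighbour of $b_j$'' (which is all you need) repairs this.
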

\begin{proof}
Part (a): Let $1 \leq m \leq n \leq 2m+1$. Since $G \cong K_{m,n}$ we consider two sub-cases. 

Sub-case (a)(i): Consider $G = K_{n,n}$, $n = m \geq 2$. It is easy to see that the result holds for $K_{2,2}$ and $K_{3,3}$. Assume it holds for $n = t$. By assumption $\varpi(K_{t,t}) = 1$. Note that each vertex degree in $K_{t,t}$ equals $t$. Consider $K_{t+1,t+1}$ and note that each vertex degree equals $t+1$. Thus, by immediate induction the result $\varpi(G) = 1$ holds for $g = K_{t+1,t+1}$. That settles the result $\forall~ n = m \geq 2$.

Sub-case (a)(ii): Consider $G = K_{m,n}$, $1 \leq m < n \leq 2m+1$. Clearly, for $m = 1$ the graphs $K_{1,2}$ and $K_{1,3}$ can be completed in one global iteration. Note that $K_{1,4}$ cannot be completed in one global iteration. Let the independent $n$ vertices be labeled $v_1$, $v_2$, $v_3$,\dots, $v_n$. It is equally easy to see that $K_{2,3}$, $K_{2,4}$ and $K_{2,5}$ can be completed in one global iteration. Observe that in the Marcello completion of $K_{2,3}$ and without loss of generality by, say $v^+_1v_2$, $v^+_1v_3$, $v^+_2v_3$ a total of $3$ degree-counts (one at $v_2$ and two at $v_3$) were not utilized. Similarly for the completion of $K_{2,4}$ one degree-count at each vertex $v_3$ and $v_4$ was not utilized. The aforesaid degree-counts together with the degree of $v_5$ in $K_{2,5}$ make it possible to complete $K_{2,5}$ in one global iteration. Note that $K_{2,6}$ cannot be completed in one global iteration. Hence, the result holds for $m = 1$ and $n = 2,3$; $m = 2$ and $n = 3,4,5$ respectively. Assume it holds for $6 \leq n = t$ and $t$ is even. For $m \geq \frac{t}{2}$ consider the sufficient case $m = \frac{t}{2}$. Hence, in $K_{m,t}$ each vertex $v_i$, $1 \leq i \leq t$ has degree equal to $\frac{t}{2}$. By the Marcello completion pattern from the cases above i.e. 
\begin{center}
$v^+_1v_j$ for $j = 2,3,\dots,\frac{t+2}{2}$;

$v^+_2u_j$ for $j = 3,4,\dots,\frac{t+4}{2}$;

$v^+_3u_j$ for $j = 4,5,\dots,\frac{t+6}{2}$;

$\vdots$

$v^+_{\frac{t+4}{2}}u_j$ for $j = 1,\frac{t+6}{2},\dots,t$;

$v^+_{\frac{t+6}{2}}u_j$ for $j = 1,2,\frac{t+8}{2},\dots,t$;

$\vdots$

$v^+_tu_j$ for $j = 1,2,3,4,\dots,\frac{t-2}{2}$;
\end{center} 
it follows that exactly $\frac{t-2}{2}$ degree-counts were not utilized. Firstly, by immediate induction the not-utilized degree-counts follows in similar fashion as for all even $t$. Secondly, by a similar completion pattern from the $t$ is even cases the odd case $n = t+1$ can be completed with the available degree of vertex $v_{t+1}$ namely, $\frac{t}{2}$ with the added $\frac{t-2}{2}$ degree-counts read together with Corollary \ref{cor1}. By induction the result follows for $n = t+1$.

If the proof begins with $n = t$ and $t$ is odd then similar reasoning settles the result. Hence, the result holds for $m \geq 3$ and $m + 1 \leq n \leq 2m+1$.

Part (b): Let $\varpi(G) = 1$. By assumption Marcello completion applies to non-complete graphs hence, $(m,n) \neq (1,1)$. Let $m < n$ and $n > 2m+1$. We only have to consider the vertex subset $\{v_1,v_2,v_3,\dots, v_n\}$ as an empty subgraph, say $H$ with each vertex allocated the artificial degree equal to $m$. It follows that since $n > 2m+1$ we have $m < \frac{n-1}{2}$. For subgraph $H$ it follows that $n \times m < \frac{n(n-1)}{2}$. Also $\varepsilon(H) = 0$ thus
\begin{center}
$0 + nm < \frac{n(n-1)}{2}$.
\end{center}
Hence, by Theorem \ref{thm1} it follows that Marcello completion of $H$ is not possible in one global iteration. Therefore, the Marcello completion of $G$ is not possible in one global iteration. The aforesaid is a contradiction. Thus, $1 \leq m \leq n \leq 2m+1$.
\end{proof}
A corollary follows. The proof thereof follows the framework of reasoning found in the proof of Proposition \ref{prop2}. The proof is are left for the reader.
\begin{corollary}\label{cor3}
For $G + H$ it follows that
\begin{center}
$\varpi(G + H) \leq max\{\varpi(G),\varpi(H)\}$.
\end{center}
\end{corollary}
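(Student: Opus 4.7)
The plan is to run optimal Marcello sequences of $G$ and of $H$ in parallel inside $G+H$, exploiting that the join supplies every cross-part edge for free. Write $k_G=\varpi(G)$ and $k_H=\varpi(H)$; the inequality is vacuous if either is infinite, so I assume both are finite and, without loss of generality, $k_G \ge k_H$. Using Claim \ref{clm1}, I fix optimal Marcello sequences $G=G_0,G_1,\dots,G_{k_G}$ and $H=H_0,H_1,\dots,H_{k_H}$, and I aim to build a Marcello sequence $F_0=G+H,F_1,\dots,F_{k_G}$ whose $i$-th global iteration performs, in the prescribed order, the local iterations that convert $G_{i-1}$ to $G_i$ and (when $i \le k_H$) those that convert $H_{i-1}$ to $H_i$; at any vertex of $X_{i-1}$ in $F_{i-1}$ that is not so scheduled, I add $0$ Marcello edges.

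Correctness will rest on the observation that every non-edge of $G+H$ has both endpoints in $V(G)$ or both in $V(H)$, because the join contributes all cross-part edges. Maintaining inductively that the subgraphs of $F_{i-1}$ induced on $V(G)$ and on $V(H)$ are exactly $G_{i-1}$ and $H_{i-1}$, the non-neighbors of any $v \in V(G)$ in $F_{i-1}$ coincide with its non-neighbors in $G_{i-1}$, and symmetrically for $V(H)$; hence the same Marcello edges remain legal, and no edge of $F$'s completion ever crosses the two parts. The degree budget is also comfortable: for $v \in V(G)$, Step 2 of Definition \ref{def1} permits up to $deg_{F_{i-1}}(v) = deg_{G_{i-1}}(v) + |V(H)|$ Marcello edges at the local iteration on $v$, which dominates the $deg_{G_{i-1}}(v)$ budget spent during $G$'s own iteration at $v$; the analogue holds for $V(H)$.

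After $k_G$ global iterations the induced subgraphs on $V(G)$ and on $V(H)$ are both complete, and every cross-edge is already present from the join, so $F_{k_G} \cong K_{|V(G)|+|V(H)|}$ and therefore $\varpi(G+H) \le k_G = max\{\varpi(G),\varpi(H)\}$. The main friction is purely the bookkeeping just indicated---preserving the inductive invariant under the chosen order of local iterations, scheduling no Marcello edges at extra members of $X_{i-1}$, and letting Step 3 absorb any vertex that becomes universal in $F$ earlier than in $G$ or $H$. No deeper obstacle arises because the join mechanically turns every between-part Marcello step into a no-op.
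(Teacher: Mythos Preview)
The paper does not actually supply a proof of Corollary~\ref{cor3}; it only remarks that the argument ``follows the framework of reasoning found in the proof of Proposition~\ref{prop2}'' and is left to the reader. Your parallel-execution idea---run an optimal Marcello sequence for $G$ on $V(G)$ and one for $H$ on $V(H)$ simultaneously inside $G+H$, using that every non-edge of the join lies entirely inside one part---is a clean, structural route and is more transparent than the case-and-induction style of Proposition~\ref{prop2} that the paper points to.

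There is one technical slip. You propose to ``add $0$ Marcello edges'' at any vertex of $X_{i-1}(F)$ not scheduled by the $G$- or $H$-sequence (and, implicitly, to add \emph{exactly} the edges of the $G$- or $H$-sequence at scheduled vertices). Definition~\ref{def1}, as the paper uses it (see the discussion around Figures~\ref{fig2}--\ref{fig3} and the remark in the Conclusion that ``only maximality of the number of Marcello edges is guaranteed''), forces each local iteration to add $\min\{deg_{G_{i-1}}(v),\ \text{current non-neighbours}\}$ edges; deliberately adding none, or fewer than this, is not a legal execution. The repair is painless: at every local iteration in $F$ add the maximally many edges required, \emph{prioritising} the edges prescribed by the $G$- (resp.\ $H$-) sequence. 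The budget in $F$ dominates the budget in $G$ (you already noted $deg_{F_{i-1}}(v)=deg_{G_{i-1}}(v)+|V(H)|$), so those prescribed edges always fit; any surplus edges only help. Your inductive invariant then weakens from equality to containment, namely $F_i|_{V(G)}\supseteq G_i$ and $F_i|_{V(H)}\supseteq H_i$ as spanning subgraphs, which is all you need: after $k_G$ global iterations both induced parts are complete and the cross-edges were present from the start, giving $F_{k_G}\cong K_{|V(G)|+|V(H)|}$. With this adjustment the argument goes through.
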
 
Two existence theorems follow.
\begin{theorem}\label{thm3}
For $\mathfrak{N}_m$, $m \geq 2$ there exists a minimum $n \geq 1$ such that $\varpi(K_n + \mathfrak{N}_m) = 1$. 
\end{theorem}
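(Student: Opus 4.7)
The plan is to recognise that the statement is purely existential: by the well-ordering of the positive integers, the set $S_m = \{n \geq 1 : \varpi(K_n + \mathfrak{N}_m) = 1\}$ admits a minimum as soon as it is non-empty. Hence it suffices to display one concrete $n$ lying in $S_m$. My candidate is $n = m-1$, which is a positive integer because $m \geq 2$.

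The first step is to notice that $K_{m-1,m}$ is a spanning subgraph of $K_{m-1} + \mathfrak{N}_m$: both graphs share the same vertex set, and retaining only the cross-edges between the $K_{m-1}$ part and the $\mathfrak{N}_m$ part produces exactly $K_{m-1,m}$. Claim \ref{clm2} then delivers $\varpi(K_{m-1} + \mathfrak{N}_m) \leq \varpi(K_{m-1,m})$. The second step is to evaluate $\varpi(K_{m-1,m})$ via Proposition \ref{prop2}: identifying $K_{m-1,m}$ with $\mathfrak{N}_{m-1} + \mathfrak{N}_m$ and taking the smaller part to have size $m-1$, the two hypotheses $(m-1, m) \neq (1,1)$ (valid because $m \geq 2$) and $m \leq 2(m-1) + 1 = 2m-1$ (trivial) are both satisfied, so $\varpi(K_{m-1,m}) = 1$. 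Consequently $\varpi(K_{m-1} + \mathfrak{N}_m) \leq 1$, and since the graph is non-complete (the $m$ vertices of $\mathfrak{N}_m$ remain pairwise non-adjacent) the Marcello number is exactly $1$.

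No serious obstacle is anticipated. The one point that warrants extra care is the degenerate case $m = 2$, where $K_{m-1} + \mathfrak{N}_m \cong K_{1,2}$ and the added $K_{m-1} = K_1$ is edgeless, so the spanning-subgraph reduction collapses to an equality; a direct inspection confirms that the single Marcello edge between the two pendants completes the graph in one global iteration, keeping $m - 1 = 1$ inside $S_m$. Invoking the well-ordering of $\mathbb{N}$ then produces the desired minimum and concludes the proof.
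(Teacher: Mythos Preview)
Your argument is correct. Both you and the paper reduce the theorem to showing that the set $S_m=\{n\geq 1:\varpi(K_n+\mathfrak{N}_m)=1\}$ is non-empty and then invoke the well-ordering of $\mathbb{N}$, but you populate $S_m$ differently. The paper argues, in the style of the proof of Proposition~\ref{prop2}, that for some unspecified sufficiently large $n_1$ one global iteration suffices, and leaves the details to the reader. You instead exhibit the concrete witness $n=m-1$ and, rather than re-running a Proposition~\ref{prop2}-type construction, invoke Proposition~\ref{prop2} itself as a black box on the spanning subgraph $K_{m-1,m}\cong \mathfrak{N}_{m-1}+\mathfrak{N}_m$, then lift the conclusion to the supergraph via Claim~\ref{clm2}. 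Your route is more explicit and more self-contained, and as a bonus it yields the quantitative information that the minimum sought is at most $m-1$; the paper's proof gives no such bound.
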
 
\begin{proof}
It is trivial to see that for a fixed $m \geq 2$ a sufficiently large $K_{n_1}$ exists such that on completion of the first Marcello global iteration the graph $K_n + \mathfrak{N}_m$ is complete. The reasoning follows along the line of that found in the proof of Proposition \ref{prop2}. Obviously the minimization of such $n_1$ to find a minimum $n$ is possible.
\end{proof}
\begin{theorem}\label{thm4}
For $\mathfrak{N}_m$, $m \geq 2$ there exists a minimum $n \geq 2$ such that $\varpi(K_n \cup \mathfrak{N}_m) = 2$. 
\end{theorem}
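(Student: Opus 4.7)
The plan is to prove existence in two stages: first I establish the lower bound $\varpi(K_n\cup\mathfrak{N}_m)\geq 2$ for every admissible $n$, then I exhibit one concrete $n$ for which equality is attained; the minimum will then exist by well-ordering of the integers $n\geq 2$.

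For the lower bound, note that the $m$ vertices of $\mathfrak{N}_m$ are isolated in $G=K_n\cup\mathfrak{N}_m$, and $m\geq 2$, so Claim \ref{clm4} immediately yields $\varpi(G)\geq 2$ regardless of $n$.

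For the upper bound I take any $n\geq m+1$ and display an explicit Marcello sequence of length $2$. Label $V(K_n)=\{v_1,\dots,v_n\}$ and $V(\mathfrak{N}_m)=\{u_1,\dots,u_m\}$. In the first global iteration $X_0=V(K_n)$ because each $v_i$ has $\deg_G(v_i)=n-1$, strictly between $0$ and $n+m-1$, while each $u_j$ is isolated and hence excluded from $X_0$ by Step 1. Each $v_i$ carries budget $n-1\geq m$ and has precisely $m$ non-neighbors $u_1,\dots,u_m$, so processing $v_1,v_2,\dots,v_n$ in turn, with each initiating the $m$ Marcello edges $v^+_iu_1,\dots,v^+_iu_m$, produces $G_1\cong K_n+\mathfrak{N}_m$. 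In the second global iteration $X_1=\{u_1,\dots,u_m\}$, since each $v_i$ now has full degree $n+m-1$ while each $u_j$ has degree $n$. Processing $u_1,u_2,\dots,u_m$ in order, when $u_k$ is considered the edges $u_iu_k$ with $i<k$ have already been inserted, so the remaining non-neighbors of $u_k$ are $u_{k+1},\dots,u_m$, a total of $m-k\leq m-1\leq n$ vertices, well within the budget $n$; hence $u_k$ initiates $u^+_ku_{k+1},\dots,u^+_ku_m$, and after $u_{m-1}$ has been processed every pair $u_iu_j$ is an edge, giving $G_2\cong K_{n+m}$. This yields $\varpi(K_n\cup\mathfrak{N}_m)\leq 2$ for every $n\geq m+1$, and combined with the lower bound we get equality.

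Consequently the set $\{n\geq 2:\varpi(K_n\cup\mathfrak{N}_m)=2\}$ is non-empty (containing every $n\geq m+1$) and admits a minimum by well-ordering, which is the asserted value. The only delicate step is the book-keeping in the second iteration: one must confirm that processing the $u_k$'s in sequence shrinks each remaining non-neighbor set fast enough to stay within the budget $n$. The natural order $u_1,u_2,\dots,u_m$ accomplishes this transparently, so no real obstacle arises; the actual minimum value of $n$ may well be much smaller than $m+1$, but for the existence statement claimed by the theorem this explicit construction suffices.
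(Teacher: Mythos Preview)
Your proof is correct and follows the same strategy as the paper's: show that for sufficiently large $n$ two Marcello global iterations suffice to complete $K_n\cup\mathfrak{N}_m$, then appeal to well-ordering for the minimum. The paper's argument is terser and leaves both the explicit construction and the lower bound $\varpi\geq 2$ implicit, whereas you supply the concrete threshold $n\geq m+1$, an explicit two-step Marcello sequence, and invoke Claim~\ref{clm4} for the lower bound---so your version is more detailed but not a genuinely different route.
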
 
\begin{proof}
It is trivial to see that for a fixed $m \geq 2$ a sufficiently large $K_{n_1}$ exists such that on completion of the first Marcello global iteration on $G_0 = K_n \cup \mathfrak{N}_m$ an important condition is met. That is, that the respective vertex degrees of $G_1$ are sufficiently large to complete $G_1$ in the second global iteration. Obviously the minimization of such $n_1$ to find a minimum $n$ such that $\varpi(K_n \cup \mathfrak{N}_m) = 2$ is possible. 
\end{proof}
\section{On certain classical graphs}
Theorem \ref{thm2}, Claims \ref{clm2}, \ref{clm3} and Proposition \ref{prop2} motivate researching certain classical graphs. First, consider the following pairwise procedure (for brevity, $\mathcal{P}$-procedure) to calculate the sum of integers. Firstly, let the string of integers have an even number of terms:
\begin{center}
$1+2+3+4+5+6+7+8+9+10+11+12+13+14 =$

$\overbrace{(1+2)+(3+4)+(5+6)+(7+8)+(9+10)+(11+12)+(13+14)} =$

$(3+7)+(11+15)+(19+23)+(27) =$

$(10+26)+(42+27) =$

$\underbrace{(36+69)} =$

$105$.
\end{center}
The lines between the over- and under-brace are called \emph{braced iterations}. Note that in the example there are $t = 4$ braced iterations. The initial string of integers to be summed has length $\ell = 14$. Furthermore, $14 = 2^3 +6$. Note that $2$ has maximum exponent i.e. $ex = 3$ in this particular number theoretical expression of the number $14$. 

Secondly, let the string of integers have an odd number of terms:
\begin{center}
$1+2+3+4+5+6+7+8+9 =$

$\overbrace{(1+2)+(3+4)+(5+6)+(7+8)+(9)} =$

$(3+7)+(11+15)+(9) =$

$(10+26)+(9) =$

$\underbrace{(36+9)} =$

$45$.
\end{center}
Note that in the example there are $t = 4$ braced iterations. The initial string of integers to be summed has length $\ell = 9$. Furthermore, $9 = 2^3 +1$. Note that maximum exponent i.e. $ex = 3$ in this particular number theoretical expression of the number $9$. 
\begin{claim}\label{clm5}
For the summation of a string of integers with the length of the string (number of terms) equal to $\ell \geq 2$ the value of $\ell$ can be bounded by $2^{ex} + k \leq \ell \leq 2^{ex+1}$, $k \geq 1$ where $ex$ is the smallest non-negative integer. Finally, the $\mathcal{P}$-procedure will require $t= ex + 1$ braced iterations to complete summation.
\end{claim}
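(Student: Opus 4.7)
The plan is to interpret the ambiguous phrase ``$ex$ is the smallest non-negative integer'' in the only way consistent with the two worked examples, and then track the length of the string across successive braced iterations. The key observation is that one braced iteration pairs up consecutive terms (leaving a solitary trailing term when the count is odd), so a string of length $\ell$ is replaced by a string of length $\lceil \ell/2 \rceil$, and the $\mathcal{P}$-procedure terminates precisely when the length has been reduced to $1$.

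First I would formalize the definition of $ex$. Given $\ell \geq 2$, let $ex$ be the unique non-negative integer with $2^{ex} < \ell \leq 2^{ex+1}$; equivalently, $ex$ is the smallest non-negative integer for which $2^{ex+1} \geq \ell$. Existence and uniqueness follow from the fact that the powers of $2$ form a strictly increasing unbounded sequence with $2^0 = 1 < \ell$. Writing $k = \ell - 2^{ex}$ we immediately obtain $k \geq 1$ and
\begin{equation*}
2^{ex} + k \leq \ell \leq 2^{ex+1},
\end{equation*}
which is the first assertion.

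Next I would establish the recursion $\ell_{i+1} = \lceil \ell_i/2 \rceil$, where $\ell_i$ denotes the string length at the start of the $i$-th braced iteration and $\ell_0 = \ell$. This is a direct unpacking of the procedure: the first $2\lfloor \ell_i/2 \rfloor$ terms are grouped into $\lfloor \ell_i/2 \rfloor$ new terms, and if $\ell_i$ is odd the trailing term is carried unchanged, contributing exactly one more term. A short induction on $t$ (or the standard ceiling identity $\lceil \lceil x/a\rceil / b \rceil = \lceil x/(ab) \rceil$) then gives $\ell_t = \lceil \ell / 2^t \rceil$ for every $t \geq 0$.

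Finally I would argue termination. The procedure stops when $\ell_t = 1$, i.e., at the smallest $t$ with $\lceil \ell/2^t \rceil = 1$, which is the smallest $t$ with $\ell \leq 2^t$. By the defining inequality $2^{ex} < \ell \leq 2^{ex+1}$ this smallest $t$ is exactly $ex + 1$, which closes the proof. The main obstacle is mostly notational rather than mathematical, namely pinning down the intended meaning of $ex$ and verifying the ceiling recursion cleanly in both parity cases; once that is done the claim is essentially immediate.
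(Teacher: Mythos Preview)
Your argument is correct and in fact considerably more complete than what the paper offers. The paper does not give a proof at all: it supplies only a short \emph{Motivation}, observing that $\mathbb{N}$ partitions into the dyadic blocks $\{1,2\},\{3,4\},\{5,6,7,8\},\dots,\{2^{ex}+1,\dots,2^{ex+1}\},\dots$ and then checking three numerical examples ($\ell=2,8,11$). That is essentially a restatement of the bound $2^{ex}<\ell\le 2^{ex+1}$ together with empirical verification, with no justification of the iteration count itself. Your route---formalizing $ex$ via $2^{ex}<\ell\le 2^{ex+1}$, establishing the length recursion $\ell_{i+1}=\lceil \ell_i/2\rceil$, passing to the closed form $\ell_t=\lceil \ell/2^t\rceil$, and reading off the least $t$ with $\ell_t=1$---actually proves the claim. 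The dyadic partition in the paper and your defining inequality for $ex$ are the same idea, so the two approaches agree on the first assertion; what your version adds is the missing link between the pairing rule and the number of braced iterations.
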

\textit{Motivation:} The claim follows from the fact that natural number $\mathbb{N} = \{1,2,3,\dots\}$ can be partitioned as,
\begin{center}
$\mathbb{N} = \{1,2\}\cup \{3,4\}\cup \{5,6,7,8\} \cup \cdots \cup \{2^{ex}+1,\dots,2^{ex}\}\cdots\}$.
\end{center}
For example, for $\ell = 2$, $2^0 + 1 = 2 \leq 2^1$ thus, $t = 1$. Also, for $\ell = 8$, $2^2 + 4 = 8 \leq 2^3$ thus, $t = 3$ and for for $\ell = 11$, $2^3 + 3 = 11 \leq 2^4$ thus, $t = 4$.
\subsection{Paths:}
Recall that a path of order $n \geq 2$ and denoted by $P_n$ has vertex set $V(P_n) = \{v_1, v_2, v_3,\dots, v_n\}$ with the edge set $E(P_n) = \{v_iv_{i+1}:i = 1,2,3,\dots,n-1\}$.
\begin{lemma}\label{lem1}
(a) $\varpi(P_2) = 0$, $\varpi(P_i) = 1$ for $i = 3,4,5,6$ and $\varpi(P_7) = 2$.
\end{lemma}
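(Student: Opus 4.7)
The plan is to handle the statement in three parts.

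The claim $\varpi(P_2) = 0$ is immediate from the convention $\varpi(K_n) = 0$, since $P_2 \cong K_2$.

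For $i \in \{3,4,5,6\}$ the task is to exhibit a single global iteration of Definition \ref{def1} that produces $K_i$. The case $i = 6$ is already supplied by Figure \ref{fig2}. For the remaining cases one writes down explicit sequences: for $P_3$ one takes $X_0 = \{v_1,v_3\}$ (noting $deg_{P_3}(v_2) = n-1 = 2$) and adds $v^+_1v_3$; for $P_4$ one processes $v_2, v_3, v_1$ in order, adding $v^+_2v_4;\ v^+_3v_1;\ v^+_1v_4$; for $P_5$ one processes $v_2, v_3, v_4, v_1$ and adds $v^+_2v_4, v^+_2v_5;\ v^+_3v_1, v^+_3v_5;\ v^+_4v_1;\ v^+_1v_5$. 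In each case the number of Marcello edges incident to any chosen $v_{t_j}$ is at most its quota $deg_{G_0}(v_{t_j})$, so Step 2 of Definition \ref{def1} is respected, and the resulting graph is $K_i$.

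For $P_7$ the proof has two halves. The lower bound $\varpi(P_7) \geq 2$ is a direct application of Corollary \ref{cor1}: every vertex of $P_7$ has degree $1$ or $2$, each lying strictly between $0$ and $n-1 = 6$, so
\begin{center}
$\sum\limits_{0 < deg_{P_7}(v_i) < 6} deg_{P_7}(v_i) + |E(P_7)| = 2\cdot 6 + 6 = 18 < 21 = \frac{7(7-1)}{2}$,
\end{center}
which forces $\varpi(P_7) \geq 2$. For the matching upper bound $\varpi(P_7) \leq 2$, I would exhibit an explicit Marcello sequence of length two. In the first global iteration one processes the interior vertices $v_2, v_3, v_4, v_5, v_6$ (each with quota $2$) and then the pendants $v_1, v_7$ (each with quota $1$), saturating each quota against the current non-neighbors in $H_{j-1}$. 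A careful choice at each local step leaves a $G_1$ whose only missing edges form a small set $S$ incident to vertices whose degree in $G_1$ already exceeds $|S|$; a second global iteration processing those few vertices then adds exactly the edges of $S$ and yields $G_2 \cong K_7$.

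The main obstacle is the upper-bound construction for $P_7$. The total number of edges missing after one global iteration is small, but one must select the Marcello edges at each of the roughly ten local iterations of the first global iteration so that $\overline{G_1}$ is concentrated on vertices whose quota in $G_1$ is large enough to finish the completion in the second iteration. Bookkeeping the degrees of each vertex and the set of non-neighbors in $H_{j-1}$ as the local iterations proceed is where care is required; once the right first iteration is chosen, the second iteration is routine by Theorem \ref{thm1}-style accounting.
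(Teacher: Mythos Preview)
Your proposal is correct. The explicit local-iteration sequences you give for $P_3,P_4,P_5$ are valid, and your treatment of $P_7$ via Corollary~\ref{cor1} (the count $12+6=18<21$) is exactly the argument the paper records as the \emph{alternative} proof supplied by an anonymous reviewer. The paper's own primary argument for $\varpi(P_7)\geq 2$ is different: it observes that in the successful completion of $P_6$ every vertex's quota is exhausted, and then argues that one specific attempt on $P_7$ (mimicking the $P_6$ pattern on six of the seven vertices) leaves the remaining vertex with quota $2$ but four non-neighbors. Your counting route via Corollary~\ref{cor1} is the cleaner of the two, since it rules out \emph{all} first global iterations at once rather than analyzing a particular one. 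For the upper bound $\varpi(P_7)\leq 2$ both you and the paper leave the second-iteration construction as a routine verification, so there is no gap there relative to the paper's own standard of rigor.
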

\begin{proof}
The results $\varpi(P_2) = 0$, $\varpi(P_i) = 1$ for $i = 3,4,5,6$ are trivial to verify. Note that in the Marcello completion of $P_6$ the vertex degree of each vertex has been exhausted. The aforesaid implies that, if in path $P_7$ the equivalent Marcello edges as for $P_6$ by just replacing $v_6$ with say, $v_7$ were utilized the resultant graph has the construction of a $K_6$ on vertices $v_1,v_2,v_3,v_4,v_5,v_7$ together with $v_6$ adjacent to $v_5$ and $v_7$. However, $v_6$ awaits its turn to be utilized in the next local iteration. Since $deg_{P_7}(v_6) = 2$ and $v_6$ is non-adjacent to four vertices, completeness cannot be attained. Therefore, $\varpi(P_7) = 2$.
\end{proof}
An alternative proof for the result $\varpi(P_7) = 2$ was proposed by an \emph{anonymous reviewer}. Readers will hopefully appreciate the alternative approach.
\begin{proof}
In $P_7$ there are two vertices of degree $1$ and five vertices of degree $2$. Hence, a maximum of twelve edges can be added during the first Marcello global iteration. Thus, a resultant graph of maximum size of $18$ can yield. Since $K_7$ has size equal to $21$ at least one more global iteration is required. It is easy to see that one global iteration suffices. Therefore $\varpi(P_7) = 2$.
\end{proof} 
Observe that if in the proof above the vertices $v_1,v_2,v_3,v_4,v_5,v_6$ are utilized then the resultant graph has the construction of a $K_6$ with $v_7$ as a pendant to $v_6$. The aforesaid is a useful resultant while $v_7$ awaits its turn to be utilized in the next local iteration.

Consider the graphs $H_1,H_2,H_3,\dots,H_k$. Link each of the pairs $(H_1,H_2)$; $(H_2,H_3)$; $(H_3,H_4)$; $\cdots$ $(H_{k-2},H_{k-1})$; $(H_{k-1},H_k)$ with an arbitrary edge. We say that the graphs have been \emph{pearled}. Such pearled graphs are denoted by $\tilde{H_1}\tilde{H_2}\tilde{H_3}\tilde{\cdots}\tilde{H_k}$.
\begin{corollary}\label{cor4}
From Theorem \ref{thm2} it follows immediately that
\begin{center}
$\varpi(\tilde{K_n}\tilde{K_m}) = 1$.
\end{center}
\end{corollary}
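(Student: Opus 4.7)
The plan is to invoke Theorem \ref{thm2} together with the monotonicity statement in Claim \ref{clm2}. The pearled graph $\tilde{K_n}\tilde{K_m}$ consists of a copy of $K_n$, a copy of $K_m$, and one arbitrarily chosen edge joining a vertex of the former to a vertex of the latter. Deleting that single pearl edge returns the disjoint union $K_n \cup K_m$ on the same vertex set, so $K_n \cup K_m$ is a spanning subgraph of $\tilde{K_n}\tilde{K_m}$. Theorem \ref{thm2} gives $\varpi(K_n \cup K_m) = 1$ for $n \geq 2$, $m \geq 1$, and Claim \ref{clm2} then yields the upper bound $\varpi(\tilde{K_n}\tilde{K_m}) \leq 1$ immediately.

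For the matching lower bound I would verify that $\tilde{K_n}\tilde{K_m}$ is not complete, which forces $\varpi \geq 1$. A complete graph on $n+m$ vertices requires all $nm$ edges between the two blocks, whereas pearling contributes only one such crossing edge; under the hypothesis $n \geq 2$, $m \geq 1$ we have $nm \geq 2$, so at least one crossing edge is missing and $\varpi(\tilde{K_n}\tilde{K_m}) \geq 1$. Combining the two bounds gives the claimed equality.

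No real obstacle arises: the work is entirely absorbed by Theorem \ref{thm2} and Claim \ref{clm2}, which is precisely why the author labels the result a corollary that follows \emph{immediately}. The only small item to double-check is the boundary case $m = 1$, where the pearled graph is a $K_n$ with a pendant vertex; here the disjoint-union predecessor is $K_n \cup K_1$, to which Theorem \ref{thm2} still applies, and the argument goes through uniformly.
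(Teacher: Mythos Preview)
Your argument is correct and matches the paper's intent: the corollary is stated without proof as following \emph{immediately} from Theorem~\ref{thm2}, and your use of Claim~\ref{clm2} (spanning-subgraph monotonicity) together with the non-completeness check is precisely the implicit bridge the author has in mind. Nothing further is needed.
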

The paths $P_6$ and $P_3$ can be pearled such that $P_9 = \tilde{P_6}\tilde{P_3}$. In fact, permitting the path $P_1$ makes it possible find a pearled path,
\begin{center}
$P_n = \underbrace{\tilde{P_6}\tilde{P_6}\tilde{P_6}\tilde{\cdots}\tilde{P_6}}_{\lfloor \frac{n}{6}\rfloor-times}\tilde{P_k}$, $n \geq 7$ and $0 \leq k \leq 5$.
\end{center}
By applying Marcello's completion (Definition \ref{def1}) to each $P_6$ and to $P_k$ in \lq isolated\rq  fashion a pearled string of complete graphs i.e.
\begin{center}
$\underbrace{\tilde{K_6}\tilde{K_6}\tilde{K_6}\tilde{\cdots}\tilde{K_6}}_{\lfloor \frac{n}{6}\rfloor-times}\tilde{K_k}$
\end{center}
is obtained. The aforesaid is clearly a non-optimal first global iteration. Consider this as an initialization iteration. 
\begin{proposition}\label{prop4}
For the path $P_n$, $n \geq 7$ and let
\begin{center}
$\ell = \lfloor \frac{n}{6}\rfloor + (n - \lfloor \frac{n}{6}\rfloor) = 2^{ex} + k$.
\end{center}
Then, $\varpi(P_n) \leq ex + 2$.
\end{proposition}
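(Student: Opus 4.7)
The plan is to imitate the $\mathcal{P}$-procedure on the pearled decomposition
$$P_n \;=\; \underbrace{\tilde{P_6}\tilde{P_6}\tilde{\cdots}\tilde{P_6}}_{\lfloor n/6\rfloor\text{-times}}\,\tilde{P_k},\qquad 0\leq k\leq 5,$$
treating each block as an atomic unit. First I would spend a single Marcello global iteration as an \emph{initialization step}: inside this iteration I process the vertices block by block and, for each $P_6$ (and the trailing $P_k$), add exactly the Marcello edges that realize its individual completion supplied by Lemma \ref{lem1}. Because the edge sets chosen inside different blocks are disjoint, and because a pearling edge to a neighbouring block only inflates (never deflates) a vertex's degree, the at-most-$\deg_{G_0}(v_j)$ bound in Step 2 of Definition \ref{def1} is respected simultaneously at every vertex. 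The resulting graph $G_1$ is the pearled string
$$\underbrace{\tilde{K_6}\tilde{K_6}\tilde{\cdots}\tilde{K_6}}_{\lfloor n/6\rfloor\text{-times}}\,\tilde{K_k}$$
of $\ell$ complete-graph blocks.

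Next I would run a sequence of \emph{halving iterations} that mirrors the braced iterations of the $\mathcal{P}$-procedure. At a given stage the current graph is a pearled chain $\tilde{K_{a_1}}\tilde{K_{a_2}}\tilde{\cdots}\tilde{K_{a_s}}$. Group the blocks consecutively into pairs $\tilde{K_{a_1}}\tilde{K_{a_2}},\ \tilde{K_{a_3}}\tilde{K_{a_4}},\ldots,$ leaving the final block alone when $s$ is odd (a lone complete block contributes nothing to this iteration). By Corollary \ref{cor4} each such pair has Marcello number $1$, and the witnessing Marcello edges lie entirely inside the pair. Since the edge sets for distinct pairs are disjoint and the pearling edges crossing between different pairs only enlarge the degree budgets, all pair completions can be scheduled inside a single Marcello global iteration, exactly as in the initialization step. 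The outcome is a pearled chain of length $\lceil s/2 \rceil$.

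Finally I would count iterations. Halving the chain reduces its length from $\ell$ to $1$ in $\lceil \log_2 \ell\rceil$ iterations; under the hypothesis $\ell = 2^{ex}+k$ with $k\geq 1$ (equivalently $2^{ex}<\ell\leq 2^{ex+1}$) this count equals $ex+1$, matching the braced-iteration count of the $\mathcal{P}$-procedure. Adding the initialization iteration yields the stated bound
$$\varpi(P_n)\;\leq\;1+(ex+1)\;=\;ex+2.$$
The main technical obstacle is the simultaneous degree bookkeeping that legitimises packing many independent pair-completions into one global Marcello iteration; once this is verified by inspecting the completion patterns of Theorem \ref{thm2}---in which each participating vertex spends at most its $G_{i-1}$-degree on Marcello edges and the cross-pair pearling edges only widen that allowance---the remainder of the argument is essentially a clean translation of the $\mathcal{P}$-procedure's iteration count.
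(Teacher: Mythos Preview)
Your proposal is correct and follows essentially the same route as the paper: an initialization iteration turning the pearled $P_6$-decomposition into a pearled string of complete graphs, followed by $ex+1$ pairwise-merging iterations via Corollary~\ref{cor4}, with the iteration count supplied by Claim~\ref{clm5} (the $\mathcal{P}$-procedure). The only difference is that you spell out the degree-bookkeeping argument justifying that all pair-completions fit into a single global iteration, whereas the paper leaves this implicit.
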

\begin{proof}
Let $P_n = \underbrace{\tilde{P_6}\tilde{P_6}\tilde{P_6}\tilde{\cdots}\tilde{P_6}}_{\lfloor \frac{n}{6}\rfloor-times}\tilde{P_k}$, $n \geq 7$ and $0 \leq k \leq 5$.

Perform the initial non-optimal global iteration of Marcello's completion to obtain
\begin{center}
$G_1 = \underbrace{\tilde{K_6}\tilde{K_6}\tilde{K_6}\tilde{\cdots}\tilde{K_6}}_{\lfloor \frac{n}{6}\rfloor-times}\tilde{K_k}$, $n \geq 7$ and $0 \leq k \leq 5$.
\end{center}
Proceed by the $\mathcal{P}$-procedure to perform Marcello's completion in pairs. By Theorem \ref{thm2} each pair of complete graphs yields a larger complete graph in each global iteration. From Claim \ref{clm5} read together with Corollary \ref{cor4} completion of $P_n$ will be reached after $(ex+1)+1$ iterations. It follows that $\varpi(P_n) \leq ex+2$.
\end{proof}
Following from Claim \ref{clm2} it is useful to note that if graph $G$ of order $n$ contains a Hamilton path $P_n$ then $\varpi(G) \leq \varpi(P_n)$.

\subsection{Cycles:} Recall that a cycle of order $n \geq 3$ and denoted by $C_n$ has vertex set $V(C_n) = \{v_1, v_2, v_3,\dots, v_n\}$ with the edge set $E(C_n) = \{v_iv_{i+1}:i = 1,2,3,\dots,n-1\} \cup \{v_1v_n\}$.

A trivial bound is given by Claim \ref{clm2} i.e. $\varpi(C_n) \leq \varpi(P_n)$.
\begin{lemma}\label{lem2}
(a) $\varpi(C_3) = 0$, $\varpi(C_i) = 1$ for $i = 4,5,6,7$.
\end{lemma}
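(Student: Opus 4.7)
\emph{Proof plan.} The assertion splits into the five sub-cases $i\in\{3,4,5,6,7\}$, and in each case the plan is to exhibit a single Marcello global iteration that produces $K_i$. The case $i=3$ is immediate since $C_3\cong K_3$ and $\varpi(K_3)=0$ by convention. For $i\in\{4,5,6\}$ the graphs are small enough to be handled by a direct sequential construction; for instance in $C_4$ the two local iterations at $v_1$ and then $v_2$ add $v^+_1v_3$ and $v^+_2v_4$, already completing the graph, and entirely analogous short sequences (processing vertices in natural order and having each add Marcello edges to its current non-neighbors) work for $C_5$ and $C_6$.

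The only substantive case is $C_7$. Every vertex has degree two, so by Theorem~\ref{thm1} a single global iteration can add at most $\sum_v\deg_{C_7}(v)=14$ Marcello edges, which coincides exactly with the $\binom{7}{2}-7=14$ non-edges that must be covered. The edge budget is therefore tight: each vertex has to contribute its full two Marcello edges and no non-edge may be targeted twice. My plan is to exploit the rotational symmetry of $C_7$: assign to each $v_i$ the two Marcello edges $v^+_iv_{i+2}$ and $v^+_iv_{i+3}$ (indices modulo $7$), and process the vertices in the natural order $v_1,v_2,\dots,v_7$.

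Two checks then finish the argument. First, the fourteen unordered pairs $\{v_i,v_{i+2}\}$ and $\{v_i,v_{i+3}\}$ for $i\in\mathbb{Z}_7$ are pairwise distinct (a one-line verification using the rotational structure, since ``length-$2$'' and ``length-$3$'' chords partition $E(\overline{C_7})$ into two disjoint $7$-cycles), so together they exhaust $E(\overline{C_7})$. Second, at the moment $v_i$ is processed the Marcello edges already incident to $v_i$ are exactly those initiated by earlier vertices whose assignment points to $v_i$, namely $v^+_{i-2}v_i$ and $v^+_{i-3}v_i$ (when such predecessors exist in the order); these connect $v_i$ to $v_{i-2}$ and $v_{i-3}$, which are distinct from the intended new neighbors $v_{i+2}$ and $v_{i+3}$. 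Hence both target vertices remain non-neighbors of $v_i$ in $H_{i-1}$, the Marcello rule admits the two planned edges, and after all seven local iterations we obtain $G_1\cong K_7$.

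The only real obstacle is the tightness of the edge budget in $C_7$: because Theorem~\ref{thm1} leaves no slack, the plan has to use every vertex's capacity perfectly and without conflict, and the verification amounts to showing that the cyclic assignment above is conflict-free. Once that is checked the lemma is complete.
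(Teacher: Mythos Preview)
Your proposal is correct and follows essentially the same approach as the paper: the paper also only treats $C_7$ explicitly and uses exactly the cyclic assignment $v^+_iv_{i+2}$, $v^+_iv_{i+3}$ (indices modulo $7$) to produce all $21$ edges in one global iteration. Your write-up simply supplies the extra care---the tight edge-budget observation via Theorem~\ref{thm1} and the check that each target is still a non-neighbor when its vertex is processed---that the paper leaves implicit.
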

\begin{proof}
The proof for only $C_7$ will be provided. Add the Marcello edges $v^+_iv_{i+2}$, $v^+_iv_{i+3}$, $i = 1,2,3,4,5,6,7$ (normal modulo $7$ count applies). It is easy to see that the resultant graph has $21$ distinct edges. Hence, the cycle $C_7$ reached completion. 
\end{proof} 
By prohibiting the edges $v^+_1v_n$ and $v^+_nv_1$ a cycle may initially be dealt with in terms of its spanning subgraph $P_n$. By Claim \ref{clm2}, $\varpi(C_n) \leq \varpi(P_n)$. An analysis similar to that found in sub-section 3.1 reveals that the existence of the edge $v_1v_n$ is not always sufficient to reduce the number of global iterations required to complete a cycle $C_n$ vis-$\grave{a}$-vis completing $P_n$. 
\begin{claim}
(a) A Hamiltonian graph $G$ has $\varpi(G) \leq \varpi(C_n)$.

(b) A star, $S_{1,n}$ has $\varpi(S_{1,n}) \leq \varpi(C_n) + 1$.

(c) A wheel, $W_{1,n}$ has $\varpi(W_{1,n}) \leq \varpi(C_n)$.

(d) The Petersen graph $\mathcal{P}$ has $\varpi(\mathcal{P}) = 1$.
\end{claim}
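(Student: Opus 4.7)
The plan is to tackle the four parts in order of increasing difficulty. Parts (a) and (c) are direct applications of previously stated claims. For (a), any Hamiltonian graph of order $n$ contains $C_n$ as a spanning subgraph, so Claim \ref{clm2} immediately gives $\varpi(G)\le\varpi(C_n)$. For (c), the wheel $W_{1,n}$ has $n+1$ vertices and its hub has degree $n$, the maximum possible, so the hub lies in the set $V^*$ of Claim \ref{clm3}; the induced subgraph on the remaining vertices is exactly the rim $C_n$, whence $\varpi(W_{1,n})\le\varpi(C_n)$.

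For (b), the plan is to exhibit a single global iteration that converts $S_{1,n}$ into $W_{1,n}$, then invoke part (c). In the star, the hub has degree $n$ and is already adjacent to every other vertex, so it sits outside $X_0$ and is never processed. Each of the $n$ leaves has degree $1$, so each may contribute exactly one Marcello edge. Processing the leaves in the order $v_1,v_2,\dots,v_n$ and letting $v_i$ send its edge to $v_{i+1}$ (indices mod $n$) is legal at each step, since $v_{i+1}$ is still a non-neighbour when $v_i$ is processed; the added edges form a rim cycle on the leaves. Hence $G_1\cong W_{1,n}$, and combining with part (c) yields $\varpi(S_{1,n})\le 1+\varpi(C_n)$.

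The main obstacle is part (d), where the necessary condition of Theorem \ref{thm1} is attained with equality ($3\cdot 10+15=45=\binom{10}{2}$), so any successful first iteration must be extremely tight: every single degree-count must be spent. The approach here is to exhibit an assignment of the $30$ non-edges of $\mathcal{P}$ to its vertices so that each vertex is assigned exactly $3$ of its own non-edges. Equivalently, I would orient the complement $\overline{\mathcal{P}}$, which is the $6$-regular triangular graph $T(5)$, so that every vertex has out-degree $3$. Since $\overline{\mathcal{P}}$ is connected and $6$-regular (all degrees even), it admits an Eulerian circuit, and any Eulerian orientation realises out-degree $3$ uniformly.

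Given this orientation, the final step is to run Definition \ref{def1} by processing the $10$ vertices in any order. When a vertex $v$ is processed, the three non-edges assigned to $v$ are still non-edges of the current graph, since each non-edge has a unique designated initiator and no other vertex would have added it earlier; thus Step 2 legally allows $v$ to add all three (matching the cap $\deg_{G_0}(v)=3$). After all $10$ vertices are processed, every one of the $30$ non-edges has been inserted, so $G_1\cong K_{10}$ and $\varpi(\mathcal{P})\le 1$. Since $\mathcal{P}\ne K_{10}$ the reverse bound is automatic, giving $\varpi(\mathcal{P})=1$. The delicate point is precisely that the Marcello rule must be respected throughout the sequence of local iterations, and the uniqueness-of-assignment provided by the Eulerian orientation is what makes this work.
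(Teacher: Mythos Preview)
Your argument is correct throughout. Parts (a)--(c) follow the same line as the paper: (a) via Claim~\ref{clm2}, (b) by turning the star into a wheel in one global iteration, and (c) by stripping the hub and reducing to the rim $C_n$. Your use of Claim~\ref{clm3} for (c) is slightly cleaner than the paper's informal remark that ``the rim vertices have the advantage of degree $3$,'' but the underlying idea is the same.

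Part (d) is where the approaches diverge. The paper proceeds by brute-force explicit construction: it first adds Marcello edges within the outer and inner $5$-cycles to build two copies of $K_5$ joined by the five spokes, and then lists the remaining twenty cross-edges one by one, checking that the degree caps are respected. Your argument is more conceptual: since $\overline{\mathcal P}$ is $6$-regular and connected it carries an Eulerian orientation with uniform out-degree~$3$, and assigning each non-edge to its tail gives every vertex exactly $\deg_{\mathcal P}(v)=3$ edges to initiate, with no collisions. This buys you two things: no case-by-case verification is needed, and the method generalises instantly to any $r$-regular graph whose complement is connected and $(n-1-r)$-regular with $n-1-r$ even --- a fact the explicit construction does not reveal. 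The paper's construction, on the other hand, is self-contained and requires no appeal to Eulerian orientations.
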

\textit{Motivation:}

(a) Since a Hamiltonian graph of order $n$ is chorded cycle on $n$ vertices the cycle serves as a spanning subgraph. The result follows from Claim \ref{clm2}.

(b),(c) Clearly after the first global iteration, $G_1 \cong W_{1,n}$ where, $G = S_{1,n}$. Only the rim (or subgraph $C_n$) of the wheel requires Marcello's completion. The rim vertices have the advantage of degree $3$.
\begin{proof}
(d) Consider the classical figure of the Petersen graph. For the Petersen graph $\mathcal{P}$ label the vertices of the outer $C_5$ as $v_1,v_2,v_3,v_4,v_5$. The corresponding inner vertices are labeled $u_1,u_2,u_3,u_4,u_5$. Begin with the Marcello edges $v_1^+v_3$, $v^+_2v_4$, $v^+_3v_5$, $v^+_4v_1$, $v^+_5v_2$ and $u^+_1u_2$, $u^+_2u_3$, $u^+_3u_4$, $u^+_4u_5$, $u^+_5u_1$. Note that each vertex remains with a $+2$ degree-count each. The aforesaid may be utilized during exhaustion of the first global iteration. The partial progress made is a graph consisting of two distinct $K_5$ graphs which are linked by the set of edges $X = \{v_1u_1, v_2u_2, v_3u_3, v_4u_4, v_5u_5\}$. Consider the induced edge-subgraph $\langle X\rangle$. Now add the edges $v^+_1u_2$, $v^+_1u_3$, $v^+_2u_3$, $v^+_2u_4$, $v^+_3u_4$, $v^+_3u_5$, $v^+_4u_5$, $v^+_4u_1$, $v^+_5u_1$, $v^+_5u_2$ as well as, $u^+_1v_2$, $u^+_1v_3$, $u^+_2v_3$, $u^+_2v_4$, $u^+_3v_4$, $u^+_3v_5$, $u^+_4v_5$, $u^+_4v_1$, $u^+_5v_1$, $u^+_5v_2$.

The first Marcello global iteration is complete and the Petersen graph reached completion yielding $K_{10}$.
\end{proof}
\section{Research avenues}
\textbf{Problem 1:} Let $G$ be a graph with $\varpi(G) = k$. In Marcello's completion (see Definition \ref{def1}) a subset $X$ of $V(G)$ is defined. Can a subset $X'\subseteq X$ with smallest cardinality be selected such that the iterative application of Marcello's completion in respect of $X'$ yields $\varpi(G) = k$?

\textbf{Problem 2:} From Claim \ref{clm1} it can be said that Marcello's completion is a \emph{self learning algorithm}. Question is: For a non-complete and non-null graph $G$ of order $n$ consider all possible minimal Marcello sequences. Determine up to isomorphism, the (maximum) number of distinct graphs for which the Marcello sequences reveal the respective Marcello numbers? The said number of graphs is called the \emph{Marcello index} of $G$ and is denoted by $\varpi_i(G)$. It is easy to see that for a graph $G$ with $\varpi(G) = k$ the Marcello index is bounded by $\varpi_i(G) \geq \varpi(G) - 1$.

For example, consider the graph $G = P_2 \cup 3K_1$. It is easy to verify that up to isomorphism the first global iteration of Marcello's completion results is either $K_3 \cup 2K_1$ or $P_4 \cup K_1$. In the second global iteration of Marcello's completion and up to isomorphisms the graphs, either $K_3 + 2K_1$ or $K_2 + 3K_1$ or $\overline{P_3 \cup 2K_1}$ result. On the third global iteration the complete graph $K_5$ yields. Therefore, $\varpi(G) = 3$ whilst the \emph{self learned} results are $\varpi(K_3 \cup 2K_1) = \varpi(P_4 \cup K_1) = 2$ and $\varpi(K_3 + 2K_1) = \varpi(K_2 + 3K_1) = \varpi(\overline{P_3 \cup 2K_1}) = 1$. Thus $\varpi_i(G) = 5$. Intuitively one might think that if $\varpi(G) = 1$ then $\varpi_i(G) = 0$. The latter is not correct as can be seen from Figures $\ref{fig1}$, $\ref{fig2}$ and $\ref{fig3}$. 

\textbf{Problem 3:} For $n \in \mathbb{N}$ and up to isomorphism the number of distinct non-complete and non-null graphs of order $n$ is finite, say $q \in \mathbb{N}$. Find a set of such graphs, say $X = \{G_1, G_2, G_3,\dots, G_t\}$ which collectively through the respective Marcello sequences reveal the Marcello numbers of all such graphs of order $n$. The aforesaid implies that $t + \sum\limits_{G_j \in X}\varpi_i(G_j) \geq q$.

For example, since it is known that up to isomorphism eleven distinct simple graphs exist on four vertices, nine of the graphs are non-complete and non-null. It is easy to verify that the graphs $P_2 \cup 2K_1$, $P_3 \cup K_1$, $2P_2$ and $K_{1,3}$ have sufficient Marcello sequences to collectively reveal the Marcello numbers of all the non-complete and non-non-null graphs on four vertices. The next lemma could assist further research.
\begin{lemma}\label{lem3}
Consider a non-complete and non-null graph $G$ of order $n \geq 3$. For any completed global iteration of Marcello's completion:

(a) A connected $G$ cannot yield a disconnected graph $G_i$.

(b) $G \ncong S_{1,n-1}$ cannot yield the star $S_{1,n-1}$ (or, $K_{1,n-1}$).
\end{lemma}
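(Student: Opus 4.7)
The plan for part (a) is a one-liner. A Marcello global iteration only adjoins edges to $G_{i-1}$ and never deletes any, so $E(G_{i-1}) \subseteq E(G_i)$. Edge additions cannot disconnect a graph, hence $G$ connected forces every $G_i$ in the Marcello sequence to be connected.

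For part (b) I argue by contradiction. Suppose some completed global iteration produces $G_1 \cong S_{1,n-1}$, and let $c$ denote the unique vertex of $G_1$ of degree $n-1$. Because the iteration only adds edges, $E(G) \subseteq E(G_1) = E(S_{1,n-1})$, so every edge of $G$ must be incident to $c$. Setting $d := \deg_G(c)$, this forces $G$ to consist of $c$ joined to exactly $d$ pendant vertices together with $n-1-d$ isolated vertices. Non-nullness of $G$ gives $d \geq 1$; the assumption $G \ncong S_{1,n-1}$ gives $d \leq n-2$.

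Next, pick any pendant vertex $\ell$ of $G$ (one exists because $d \geq 1$). Since $0 < \deg_G(\ell) = 1 < n-1$, we have $\ell \in X_0$, so $\ell$ is processed during some local iteration of the global step. I then dichotomize on the state of $\ell$ when it is processed. If $\ell$ has no non-neighbor in $H_{j-1}$, then $\deg_{H_{j-1}}(\ell) = n-1$, hence $\deg_{G_1}(\ell) = n-1 \neq 1$, contradicting $\ell$ being a leaf of $S_{1,n-1}$. If $\ell$ does have a non-neighbor, then since its Marcello budget is $\deg_G(\ell) = 1$, the maximal-edge rule forces $\ell$ to acquire exactly one new Marcello edge; consequently $\deg_{G_1}(\ell) \geq 2$, again violating the leaf condition (using $n \geq 3$).

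The main obstacle, in my view, is not combinatorial but interpretive: one must read the \lq at most $\deg_{G_{i-1}}(v_{t_j})$\rq\ clause of Definition \ref{def1}, together with the word \emph{maximal} in the abstract and the behavior exhibited in Figures \ref{fig2} and \ref{fig3}, as committing the algorithm to adding exactly $\min(\deg_{G_{i-1}}(v_{t_j}), |\text{non-neighbors of } v_{t_j} \text{ in } H_{j-1}|)$ Marcello edges at each local step. Once that reading is fixed, the dichotomy above runs through at once and both parts of the lemma follow with no further bookkeeping.
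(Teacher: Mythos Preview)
Your proof is correct. Part (a) matches the paper's one-line argument exactly.

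For part (b), you take a genuinely different route from the paper. The paper argues from the extremal starting graph $P_2 \cup (n-2)K_1$: it observes that after one global iteration on this graph the two endpoints of the lone edge each emit one Marcello edge, and depending on whether those edges land on the same vertex or different ones, a $K_3$ or a $P_4$ is created; since neither is a subgraph of any star, $G_1$ cannot be $S_{1,n-1}$. The extension to arbitrary $G$ is then dismissed with ``Clearly no other graph on $n \geq 3$ vertices can contradict the aforesaid conclusion.'' Your argument instead exploits the containment $E(G) \subseteq E(G_1)$ to pin $G$ down as a proper sub-star with isolated vertices, and then tracks a single pendant vertex $\ell$ through its local iteration to force $\deg_{G_1}(\ell) \geq 2$. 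What your approach buys is rigor: you handle every $G$ directly rather than reducing to an extremal case by an unproven monotonicity. What the paper's approach buys is a forbidden-subgraph viewpoint (the appearance of $K_3$ or $P_4$) that is conceptually pleasant and in principle says slightly more than ``$G_1$ is not a star.'' One small remark: your claim that $\ell$ is necessarily processed glosses over the early-exit conditions in Steps 2 and 3 of Definition \ref{def1}, but those exits only trigger when $\ell$ already has degree $n-1$, which falls under the first branch of your dichotomy anyway.
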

\begin{proof}
(a) Trivial since Marcello's completion strictly add edges.

(b) Assume $G \ncong S_{1,n-1}$. The \lq most\rq  disconnected (i.e. least edges in $G$) is $P_2 \cup (n-2)K_1$. After the first global iteration of Marcello's completion a subgraph either $K_3$ or $P_4$ exists in $G_1$. Since neither of these subgraphs is a subgraph of a star, a star cannot yield. Clearly no other graph on $n \geq 3$ vertices can contradict the aforesaid conclusion. That settles the result.
\end{proof}
Lemma \ref{lem3} suggests that for some minimum subset $Y$ of all the disconnected graphs on $n \geq 3$ vertices the set $Y \cup \{S_{1,n-1}\}$ will reveal $\varpi(G)$ for all $G$ on $n$ vertices. 
\section{Conclusion}
With regards to Figures 1, 2, 3 it appears to be a hard problem for graphs in general to find a maximum number of Marcello edges during a global iteration. Only maximality of the number of Marcello edges is guaranteed by Definition \ref{def1}. In addition to the said difficulty it must be noted that for an arbitrary large $k \in \mathbb{N}$ there exist a graph $G$ with $\varpi(G) > k$. Amongst others, it follows from the fact that if $n$ is arbitrary but fixed then,
\begin{center}
$\lim\limits_{m\to \infty}\varpi(P_n\cup \mathfrak{N}_m) = \infty$.
\end{center} 
The problem of finding a method to guarantee finding the maximum number of Marcello edges remains open. Author views the latter problem as challenging. Solving this problem could yield closed results for numerous graphs.
\begin{conjecture} 

(a) A non-complete, connected graph $G$ with at most one pendent vertex and which has
\begin{center}
$\sum\limits_{0<deg_G(v_i) < n-1}deg_G(v_i) + |E(G)| > \frac{n(n-1)}{2}$
\end{center}
has $\varpi(G) = 1$.

(b) A non-complete, connected graph $G$ with at most one pendent vertex and which has
\begin{center}
$\sum\limits_{0<deg_G(v_i) < n-1}deg_G(v_i) + |E(G)| = \frac{n(n-1)}{2}$
\end{center}
has $\varpi(G) = 1$ or $2$.
\end{conjecture}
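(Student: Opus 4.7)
The plan is to recast a single Marcello global iteration as an orientation problem on $\overline{G}$: each non-edge $\{u,v\}$ of $G$ must be ``assigned'' to exactly one of its endpoints, using one unit of that vertex's budget $b(w) = \deg_G(w)$. Given any such budget-respecting assignment, the local iterations of Definition \ref{def1} can be executed in any order so that vertex $w$ adds precisely the non-edges assigned to it; there are at most $b(w)$ of them, and each target vertex is still non-adjacent to $w$ at the moment of addition since the relevant Marcello edge has not yet been created. Conversely, any one-iteration Marcello completion defines such an assignment by recording the initiator of each Marcello edge. Hence $\varpi(G) = 1$ if and only if $E(\overline{G})$ admits a budget-respecting orientation.

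By the standard orientation theorem of Hakimi, such an orientation exists if and only if for every subset $T$ of the non-universal vertices of $G$,
\begin{equation*}
\binom{|T|}{2} - |E(G[T])| \;\leq\; \sum_{v \in T} \deg_G(v).
\end{equation*}
Setting $e_1 = |E(G[T])|$ and $e_2 = |E(T, V\setminus T)|$, this rewrites as $\binom{|T|}{2} \leq 3e_1 + e_2$. The case where $T$ is the set of all non-universal vertices is exactly the hypothesis of Theorem \ref{thm1}, supplied by the conjecture (strictly for part (a), with equality for part (b)); it remains to verify the inequality for every other $T$.

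For a proper nonempty $T$, I would combine three ingredients. Connectedness gives $e_2 \geq 1$. The at-most-one-pendant hypothesis forces at least $|T|-1$ vertices of $T$ to have $G$-degree $\geq 2$, so $2e_1 + e_2 \geq 2|T|-1$. Each universal vertex of $G$ outside $T$ adds a further $|T|$ to $e_2$. Plugging these into $3e_1 + e_2 \geq \binom{|T|}{2}$ settles small and large $|T|$ directly; the middle range is handled by a case split on $e_1$ (treating $e_1 \leq |T|-1$ via the degree inequality and $e_1 \geq |T|$ via the bare bound $3e_1 \geq 3|T|$). For part (b), if an obstructing $T$ with exact Hall equality still persists, the first iteration nevertheless covers all but a controlled residue of non-edges; the resulting $G_1$ then has substantially higher minimum degree, satisfies the strict hypothesis of part (a), and by part (a) applied to $G_1$ we get $\varpi(G_1) = 1$, hence $\varpi(G) \leq 2$.

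The main obstacle I anticipate is in the intermediate range $3 \leq |T| \leq n-3$, where the counting above has comfortable slack at both extremes of $|T|$ but tightens in the middle; there a near-independent cluster of degree-two vertices weakly attached to the rest of $G$ could plausibly still violate Hall despite the single-pendant restriction. Pinning down these extremal configurations and excluding them via connectedness, or by sharpening the pendant hypothesis to a minimum-degree statement, will be the decisive step.
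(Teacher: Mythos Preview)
The paper offers no proof of this statement: it is posed as an open conjecture in the concluding section. So there is nothing to compare against; what matters is whether your plan could succeed.

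Your reformulation is correct and is the right lens. A single global iteration completes $G$ if and only if the edges of $\overline{G}$ can be oriented so that every vertex $v$ has out-degree at most $\deg_G(v)$; the Hakimi/Hall condition you state, namely $\binom{|T|}{2}-|E(G[T])|\le \sum_{v\in T}\deg_G(v)$ for every $T$, is exactly the criterion. The full-$T$ instance is indeed the hypothesis of the conjecture.

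The gap you flag in the intermediate range is not a technicality to be patched; it is fatal to part~(a). Take $G=K_{2,6}$. It is connected, has no pendant vertex, and every vertex has degree strictly between $0$ and $n-1=7$, so
\[
\sum_{0<\deg_G(v_i)<n-1}\deg_G(v_i)+|E(G)|=2\cdot 12+12=36>28=\binom{8}{2}.
\]
Yet with $T$ equal to the six vertices of degree~$2$ one has $e_1=0$, $e_2=12$, and $\binom{6}{2}=15>12=3e_1+e_2$, so the Hall condition fails and $\varpi(K_{2,6})\ge 2$. The paper itself records this in the proof of Proposition~\ref{prop2} (``$K_{2,6}$ cannot be completed in one global iteration''). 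More generally, every $K_{m,n}$ with $n>2m+1$ and $m\ge 2$ satisfies the strict inequality of part~(a) (since $3mn>\binom{m+n}{2}$ reduces to $m^2+n^2<4mn+m+n$, which holds whenever $m\ge 2$ for moderate $n$; e.g.\ $K_{3,8}$, $K_{4,10}$, \dots) while Proposition~\ref{prop2} gives $\varpi(K_{m,n})\ge 2$. So conjecture~(a) is false as stated, and your own Hakimi analysis is what exposes it. The right salvage is the one you hint at in your last line: replace ``at most one pendant'' by a minimum-degree hypothesis (roughly $\delta(G)\ge (n-1)/3$ forces the Hall condition on every $T$).

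For part~(b) your plan is incomplete in a different way: you invoke part~(a) on $G_1$, but part~(a) is false, so that step collapses. A direct argument for $\varpi(G)\le 2$ under the equality hypothesis would need to control the residue after one iteration without appealing to~(a).
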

Research is underway with regards to graphs with specific characteristics such as circulants, regular graphs, trees, block graphs and so on. The use of clique partitioning and other clique decomposition techniques is also under research. Interested readers can refer to \cite{bhasker, cavers}.
\section*{Dedication}
This paper is dedicated to Marcello Marques. Marcello is a baby boy the author was blessed to meet during a motorcycle adventure ride in August 2024. Marcello has \lq X\rq-tra $C21$ needs and I pray that this paper and further research by others, will add to the purpose of his life.
\section*{Acknowledgment}
The author would like to thank the anonymous referees for their constructive comments which helped to improve on the elegance of this paper.
\subsection*{Conflict of interest:} The author declares there is no conflict of interest in respect of this research. Furthermore, the author declares that no generative AI has been utilized.

\end{document}